\theoremstyle{plain}
\newtheorem{theorem}{Theorem}[section]
\newtheorem{lemma}[theorem]{Lemma}
\newtheorem{definition}[theorem]{Definition}
\newtheorem{corollary}[theorem]{Corollary}
\newtheorem{remark}[theorem]{Remark}
\DeclareMathOperator{\var}{var}
\newcommand{\R}{\mathbb R}
\newcommand{\N}{\mathbb N}
\newcommand{\I}{\mathcal I}
\newcommand{\J}{\mathcal J}
\newcommand{\cP}{\mathcal P}
\newcommand{\Fin}{\mathrm{Fin}}
\newcommand{\Exh}{\mathrm{Exh}}
\newcommand{\BV}{\mathrm{BV}}
\newcommand{\norm}[1]{\left\|#1\right\|}
\newcommand{\abs}[1]{\lvert#1\rvert}
\newcommand{\zdef}{{\mathrel{\mathop:}=}}
\newcommand{\ABV}{{ABV}}
\newcommand{\BBV}{{BBV}}
\title[BV Compactness from ideal perspective]{Compactness in spaces of functions of bounded variation from ideal perspective}
\author[J.~Gulgowski]{Jacek Gulgowski}
\address[J.~Gulgowski]{Institute of Mathematics\\ Faculty of Mathematics, Physics and Informatics\\ University of Gda\'{n}sk\\ ul. Wita Stwosza 57\\ 80-308 Gda\'{n}sk\\ Poland}
\email{Jacek.Gulgowski@ug.edu.pl}
\author[A.~Kwela]{Adam Kwela}
\address[A.~Kwela]{Institute of Mathematics\\ Faculty of Mathematics\\ Physics and Informatics\\ University of Gda\'{n}sk\\ ul.~Wita  Stwosza 57\\ 80-308 Gda\'{n}sk\\ Poland}
\email{Adam.Kwela@ug.edu.pl}
\urladdr{https://mat.ug.edu.pl/~akwela}
\author[J.~Tryba]{Jacek Tryba}
\address[J.~Tryba]{Institute of Mathematics\\ Faculty of Mathematics, Physics and Informatics\\ University of Gda\'{n}sk\\ ul. Wita Stwosza 57\\ 80-308 Gda\'{n}sk\\ Poland}
\email{Jacek.Tryba@ug.edu.pl}
\begin{document}

\begin{abstract}
Recently we have presented a unified approach to two classes of Banach spaces defined by means of variations (Waterman spaces and Chanturia classes), utilizing the concepts from the theory of ideals on the set of natural numbers. We defined correspondence between an ideal on the set of natural numbers, a certain sequence space and related space of functions of bounded variation. In this paper, following these ideas, we give characterizations of compact embeddings between different Waterman spaces and between different Chanturia classes: both in terms of sequences defining these function spaces and in terms of properties of ideals corresponding to these function spaces.
\end{abstract}

\subjclass[2010]{Primary: 
26A45.
Secondary:
11B05, 
40A05.
}

%%%%%%%%%%%%%%%%%%%%%%%%%%%%%%%%%%%%%%%%%%%%%%%%%%
%%%%% KEYWORDS
%%%%%%%%%%%%%%%%%%%%%%%%%%%%%%%%%%%%%%%%%%%%%%%%%%

\keywords{
Variation in the sense of Jordan, 
Waterman sequence, 
$\Lambda$-variation, 
Chanturia classes,
ideal,
summable ideal, 
simple density ideal.
}

\maketitle

\section{Introduction}

In this paper we are going to answer some questions about compact embeddings between certain spaces of functions of bounded variation. 

Until recently, the concept of compactness in the context of spaces of functions of bounded variation was not deeply understood. Formally, the characterization of compactness of subsets in the space $BV$ of functions of bounded Jordan variation is known since decades: it may be found in the Part I of the celebrated Dunford and Schwarz monograph 
\cite[Exercise IV.13.48]{DS}. According to this Exercise, the space $BV$ may be decomposed (as a direct sum) into its subspace $NBV$ (consisting of functions $g$ that are right-continuous  and such that $g(0)=0$) and the subspace consisting of all functions that vanish except for a countable set of points (the later is isomorphic to $L^1$). By \cite[Exercise IV.13.34]{DS}, a set $K\subseteq NBV$ is relatively compact if and only if there exists a function $g\in NBV$ such that the map
\[
A_{\mu_g} \ni h \mapsto \int_{[0,1]} h(s)\mu_f(\textup ds)
\]
is uniformly continuous with respect to $f\in K$ (here $\mu_g$ is the measure corresponding to $g\in NBV$ and $A_{\mu_g}$ is the set of all functions $x\in NBV$ such that $|x(t)|\leq 1$ for almost every $t$ with respect to the measure $\mu_g$). As we can see, this characterization is very complex and seems to be quite far from being practical and useful. It also does not look like a good starting point for generalizations to spaces of functions of bounded variation for different generalizations of Jordan variation. There were some attempts to characterize compactness in the space of continuous functions of bounded variation (see e.g. \cite{PW} and \cite{C}). Recently, a characterization of compactness of completely different nature was given (see \cite{BG20}, \cite{G23}, \cite{GKM} and \cite{SiXu}). 

So far compact embeddings between different spaces of functions of bounded variation have been considered in different contexts: in \cite{CO} by Ciemnoczo\l{}owski and Orlicz  (for functions of bounded $p$-variation and Young variation) and in \cite{BCGS} and \cite{BGK2} (for Waterman and Young variation spaces).

In this paper, we give characterizations of compact embeddings between different Waterman spaces and between different Chanturia classes. We will utilize our recent ideas from \cite{GKT}, where we have presented a unified approach to Waterman spaces and Chanturia classes using the theory of ideals on the set of natural numbers. Both of our characterizations are formulated in terms of sequences defining these function spaces as well as in terms of properties of ideals corresponding to these function spaces.

\section{Preliminaries}

\subsection{Basics about Waterman spaces}

Let us assume that $A=(a_n)_{n\in\N}$ is such a  nonincreasing sequence of positive real numbers that $\sum_{n=1}^{\infty} a_n = +\infty$. We call such sequence \emph{a Waterman sequence}. If additionally $\lim_{n\to +\infty}a_n = 0$, we say that the sequence $A$ is a \emph{proper Waterman sequence}.

Let us denote the unit interval by $I=[0,1]$. Moreover, by ${\mathcal P}_I$ we denote the set of all infinite sequences of nonoverlapping, closed  subintervals $\{I_1, I_2, \ldots, I_N, \ldots\}$ of $I$. The intervals may be {\em degenerate}, i.e. it may happen that $I_n$ consists of only one point. 

 \begin{definition}
  Let $A = (a_n)_{n\in\mathbb{N}}$ be a Waterman sequence and let $x\colon I\to\mathbb{R}$. We say that $x$ is of bounded $A$-variation  if there exists a positive  constant $M$ such that for any sequence of nonoverlapping  subintervals $\{I_1, I_2, \ldots, I_N, \ldots\} \in {\mathcal P}_I$, the following inequality holds
  \[
   \sum_{n=1}^{+\infty} a_n\abs{x(I_n)} \leq M,
  \]
  where $I_n = [s_n,t_n]$ and $|x(I_n)| = |x(t_n)-x(s_n)|$.
  The supremum of the above sums, taken over the family ${\mathcal P}_I$ of all sequences of nonoverlapping closed subintervals of $I$, is called the $A$-variation of $x$  and it is denoted by $\var_A(x)$. 
 \end{definition}

 \begin{remark}
 The special case of a sequence constantly equal to $1$ corresponds to the classical Jordan variation of a function $x$, which we will denote by $\var(x)$.
 \end{remark}
 
The concept of $A$-variation was introduced by Waterman in \cite{Wat1}. Since then functions of bounded $A$-variation were intensively studied by many authors -- for an overview we refer to \cite{ABM} and the newer position \cite{ABKR}.
 
It is worth to mention that there are many equivalent ways to express that the function $x\colon I\to\R$ is of bounded $A$-variation (cf. \cite[Theorem 1, p.~34]{Wat1976} and \cite[Proposition 1]{BCGS}), but we will not recall those results here.

The space of all  functions defined on the interval $I$ and  of bounded $A$-variation, endowed with the norm $\|x\|_{\ABV} \zdef \abs{x(0)} + \var_A(x)$ forms a Banach space $\ABV(I)$ (see \cite[Section 3]{Wat1976}).

The spaces $\ABV(I)$ are proper subspaces of the space $B(I)$ of all bounded functions $x\colon I\to\R$. The space $B(I)$ is equipped with the standard supremum norm
\[
\norm{x}_\infty = \sup_{t\in I} |x(t)|.
\]

\subsection{Chanturia classes}\label{sec:Chanturia}

In 1974 Chanturia introduced the concept of the {\em modulus of variation of the bounded function} (see \cite{Cha74}), which for $x\colon I\to\R$ is given as the sequence 
\[
v(x,n) = \sup_{\{I_1,\ldots,I_n\}\in{\mathcal P}_n} \sum_{k=1}^n |x(I_k)|,
\]
where ${\mathcal P}_n$ denotes the set of all $n$-element sequences of nonoverlapping closed subintervals of $I$. In the mentioned paper \cite{Cha74}, for a given sequence $g\colon \N\to \R$, the author introduced $V[g(n)]$ as the family of those functions $x\colon I\to\R$, for which $v(x,n) = O(g(n))$, i.e. there is $\eta>0$ such that
\[
\frac{v(x,n)}{g(n)}\leq\eta
\]
for all $n\in\N$. These classes are now called {\em Chanturia (or Chanturiya) classes} in literature. One of the results (see \cite[Theorem 1]{Cha74}) was that the necessary and sufficient condition for a sequence to be $v(x,n)$, for some function $x$, is that it is nondecreasing and concave.   These classes were studied since then in many papers, mainly in relation to the  convergence of Fourier series and relations to other families of functions of bounded variation (see especially the relation between Chanturia classes and Waterman spaces given by Avdispahi\'c in \cite{Avdispahic}).

%As we observed in \cite{GKT} we may, without a loss of generality, assume that $g:\N\to\N$. 

\subsection{Basics about ideals}

 \begin{definition}
 A family $\I\subseteq\cP(\N)$ is called an \emph{ideal} whenever
 \begin{itemize}
    \item $\N\notin\I$,
    \item if $F\subseteq\N$ is finite, then $F\in\I$,
    \item if $C\in\I$ and $D\subseteq C$, then $D\in\I$,
    \item if $C,D\in\I$, then $C\cup D\in\I$.
 \end{itemize}
 \end{definition}

 By $\Fin$ we denote the smallest ideal, i.e. the one consisting only of all finite subsets of $\N$. 

   \begin{definition}
If $\I$ and $\J$ are ideals (or $\I$ is an ideal and $\J=\mathcal{P}(\N)$), then we say that \emph{$\I$ is below $\J$ in the Kat\v{e}tov order} and write $\I\leq_K\J$ whenever there is a function $f:\N\to\N$ such that $f^{-1}[C]\in\J$ for every $C\in\I$.
 \end{definition}

Kat\v{e}tov order was introduced in the 1970s in papers \cite{Kat2} and \cite{Kat1} by Kat\v{e}tov. Note that actually, despite its name, Kat\v{e}tov order is only a pre-order, not a partial order (it is not antisymetric). 

An ideal $\I$ is a \emph{P-ideal} if for every sequence $(A_n)_{n\in\N}$ of elements of $\I$ there is $A\in\I$ such that $A_n\setminus A$ is finite for all $n\in\N$. We say that an ideal $\I$ is \emph{tall} if for every infinite $C\subseteq\N$ there is an infinite $D\subseteq C$ such that $D\in\I$. It is easy to see that $\I$ is not tall if and only if  $\I\leq_K\Fin$. Consequently, all non-tall ideals are $\leq_K$-equivalent (i.e., $\I\leq_K\J$ and $\J\leq_K\I$ for any two non-tall ideals $\I$ and $\J$).

By identifying subsets of $\N$ with their characteristic functions, we can treat ideals as subsets of the Cantor space $\{0,1\}^\N$ and therefore assign topological properties (such as being $\bf{F_\sigma}$ or analytic) to ideals on $\N$.

\subsection{Submeasures -- correspodence between ideals and variations}

A function $\phi:\mathcal{P}(\N)\to[0,\infty]$ is called a \emph{submeasure} if $\phi(\emptyset)=0$, $\phi(\{n\})<\infty$ for every $n\in\N$, and 
\[
\phi(C)\leq\phi(C\cup D)\leq\phi(C)+\phi(D)
\]
for all $C,D\subseteq\N$. A submeasure $\phi$ is \emph{lower semicontinuous} (lsc, in short) if $\phi(C)=\lim_{n\to\infty}\phi(C\cap\{1,2,\ldots,n\})$ for each $C\subseteq\N$.

Solecki in \cite[Theorem 3.1]{SoleckiExh} showed that an ideal is an analytic P-ideal if and only if it is of the form
$$\Exh(\phi)=\left\{C\subseteq\N:\ \lim_{n\to\infty}\phi(C\setminus \{1,2,\ldots,n\})=0\right\}$$
for some lower semicontinuous submeasure $\phi$ such that $\N\notin\Exh(\phi)$ (see also \cite[Theorem 1.2.5]{Farah}). 

For examples of ideals induced by submeasures see the next two Sections and \cite[Example 1.2.3]{Farah}. 

Given an lsc submeasure $\phi$, let $\mathcal{M}_\phi$ be the family of all measures $\mu$ on $\N$ such that $\mu\leq\phi$. Then, by definition, $\phi$ is \emph{non-pathological}, if $\phi(C)=\sup\{\mu(C): \mu\in\mathcal{M}_\phi\}$ for all $C\subseteq\N$. Not every lsc submeasure is non-pathological -- see \cite[Section~1.9]{farah-book}, \cite[Theorem~4.12]{ft-mazur}, \cite[Section~6.2]{marciszewski-sobota}, \cite{MezaPat} or \cite[Theorem~4.7]{tryba-gendensity} for such examples. 

The two definitions given below are basically repeated after sections 3.3 and 3.4 of \cite{GKT}. We refer the  Reader to these sections for further details, examples and discussions.

\begin{definition}
Let $\phi$ be a non-pathological lsc submeasure. Define a function $\hat\phi:\R^\N\to[0,\infty]$ by
\[
\hat\phi(x)=\sup\left\{\sum_{n\in\N}\mu(\{n\})|x_n|: \mu\in\mathcal{M}_\phi\right\}
\]
for all $x=(x_n)_{n\in\N}\in\R^\N$. 
\end{definition}

The above concept was introduced by Borodulin-Nadzieja and Farkas in \cite[Proposition 5.3]{FBN}.

\begin{definition}
Let $\phi$ be a non-pathological lsc submeasure. For $J=(J_n)_{n\in\N}\in\mathcal{P}_I$ denote by $x(J)$ the sequence $(|x(J_n)|)_{n\in\N}$. Define
\[
\BV(\phi)=\left\{x\in B(I): \sup_{J\in\mathcal{P}_I}\hat\phi(x(J))<\infty\right\}.
\]
\end{definition}

As proved in \cite{GKT}, this is a Banach space normed by:
\[
\|x\|_\phi=|x(0)|+\sup_{J\in\mathcal{P}_I}\hat\phi(x(J)).
\]

\section{Waterman spaces}

\subsection{Submeasures and ideals associated to Waterman spaces}

For a sequence of positive real numbers $A=(a_n)_{n\in\N}$ denote
$$\I_A=\left\{C\subseteq\N: \sum_{n\in C}a_n<\infty\right\}.$$
Note that either $\I_A=\mathcal{P}(\N)$ (if $\sum_{n=1}^\infty a_n<\infty$) or $\I_A$ is an ideal (if $\sum_{n=1}^\infty a_n=\infty$).

  \begin{definition}
  \label{def-summable}
An ideal is called a \emph{summable ideal} if it is of the form 
$\I_A$, for some sequence of positive real numbers $A=(a_n)_{n\in\N}$ such that $\sum_{n=1}^\infty a_n=\infty$.
 \end{definition}

\begin{remark}
Note that in the definition of summable ideals we do not require that $A$ is nonincreasing. However, in our paper we will only consider summable ideals given by nonincreasing sequences, that is, by Waterman sequences.
 \end{remark}

Note that $\Fin$ is a summable ideal (given by the sequence constantly equal to $1$). If $A=(a_n)_{n\in\N}$
 is a sequence of positive real numbers such that $\sum_{n=1}^\infty a_n=\infty$, then $\I_A$ is tall if and only if $\lim_{n\to\infty}a_n=0$.

For any sequence of positive real numbers $A=(a_n)_{n\in\N}$ such that $\sum_{n=1}^\infty a_n=\infty$ we can define an lsc submeasure $\phi_A$ by the formula $\phi_A(C)=\sum_{n\in C}a_n$. Observe that in this case $\I_A=\Exh(\phi_A)$ and $\BV(\phi_A)=\ABV$.

In \cite{GKT} we investigated connections between summable ideals and $\ABV$ spaces. In particular, we have shown the following.

\begin{theorem}[{\cite[Theorem 6.2]{GKT}}]\label{th:inclusions}
Assume we have two  Waterman sequences $A=(a_n)_{n\in\N}$ and $B=(b_n)_{n\in\N}$. The following statements are equivalent:
\begin{itemize}
\item[(a)] $\sum_{i=1}^n b_i = O(\sum_{i=1}^n a_i)$, i.e., there is $\eta>0$ such that 
\[
\frac{\sum_{i=1}^n b_i}{\sum_{i=1}^n a_i}\leq\eta
\]
for all $n\in N$,
\item[(b)] $\I_A\leq_K\I_B$,
\item[(c)] $\ABV\subseteq\BBV$.
\end{itemize}
\end{theorem}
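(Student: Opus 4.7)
I would prove (a) $\Leftrightarrow$ (c) via direct variational estimates, and (a) $\Leftrightarrow$ (b) by an explicit construction in one direction and a combinatorial extraction in the other.

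For (a) $\Rightarrow$ (c): given $x \in \ABV$ and $\{I_n\} \in \mathcal{P}_I$, the rearrangement inequality (applicable because both $(a_n)$ and $(b_n)$ are nonincreasing) lets me assume $|x(I_n)|$ is also nonincreasing. Setting $c_n = |x(I_n)|$ and $B_n = \sum_{i=1}^n b_i$, summation by parts gives
\[
\sum_{n=1}^N b_n c_n = B_N c_N + \sum_{n=1}^{N-1} B_n(c_n - c_{n+1}),
\]
and since every factor is nonnegative, the assumption $B_n \leq \eta A_n$ yields $\sum b_n c_n \leq \eta \sum a_n c_n \leq \eta\,\var_A(x)$, hence $\var_B(x) \leq \eta\,\var_A(x)$. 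For (c) $\Rightarrow$ (a): the closed graph theorem (both spaces are Banach, the inclusion is linear, and the graph is closed since $\|\cdot\|_{\ABV}$ and $\|\cdot\|_{\BBV}$ both dominate pointwise values) turns $\ABV \subseteq \BBV$ into a bounded embedding $\|\cdot\|_{\BBV} \leq C\|\cdot\|_{\ABV}$. Testing against the step functions $x_n$ alternating $\pm 1$ on $n$ equal subintervals, which satisfy $\var_A(x_n) = 2A_{n-1}$ and $\var_B(x_n) = 2B_{n-1}$, then yields $B_n = O(A_n)$ after absorbing additive constants using $A_n \to \infty$.

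For (a) $\Rightarrow$ (b): I define $f:\N\to\N$ by $f(n) := \min\{m\in\N : \eta A_m \geq B_n\}$, well-defined because $A_m \to \infty$. Since $B_n$ is nondecreasing in $n$, each preimage $f^{-1}(m)$ is an interval of consecutive integers, and telescoping gives $\sum_{n\in f^{-1}(m)} b_n \leq \eta(A_m - A_{m-1}) = \eta a_m$. Hence $\sum_{n\in f^{-1}(C)} b_n \leq \eta \sum_{m\in C} a_m < \infty$ for every $C\in\I_A$, so $f$ witnesses $\I_A \leq_K \I_B$.

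For (b) $\Rightarrow$ (a): given $f$, set $w_m := \sum_{n\in f^{-1}(m)} b_n$, finite because $\{m\}\in\I_A$. The Kat\v{e}tov condition translates into the summable-ideal inclusion $\I_A \subseteq \{C : \sum_{m\in C} w_m < \infty\}$. After reducing to $a_n \to 0$ (if $\inf a_n > 0$, then $\I_A = \Fin$ and everything is trivial), a thinning argument shows that $\limsup w_n/a_n = \infty$ would let me select indices $(n_k)$ with $(a_{n_k})$ summable but $\sum w_{n_k} = \infty$, contradicting the inclusion. From the resulting pointwise bound $w_n = O(a_n)$, a monotone-rearrangement and Abel-summation step converts the weights into the partial-sums comparison $B_n = O(A_n)$. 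This last step is the main obstacle: the abstract Kat\v{e}tov condition must be turned into a quantitative natural-order partial-sums inequality, and the calibration of the thinning -- simultaneously balancing the decay of $(a_n)$ against the assumed blow-up of $w_n/a_n$ -- is the delicate combinatorial core of the proof.
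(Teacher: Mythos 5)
First, a point of reference: the paper itself contains no proof of this statement. It is imported verbatim from \cite[Theorem 6.2]{GKT}, with the remark that the equivalence (a)$\Leftrightarrow$(c) is due to Perlman and Waterman. So the comparison below is against what a correct proof must accomplish, not against a proof printed in this paper. Your treatment of (a)$\Leftrightarrow$(c) is fine and standard: the rearrangement reduction to nonincreasing $|x(I_n)|$ plus Abel summation gives $\var_B\leq\eta\var_A$, and the closed graph theorem together with alternating step functions gives the converse.

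The gap is in (a)$\Leftrightarrow$(b). For (a)$\Rightarrow$(b), the claimed telescoping bound $\sum_{n\in f^{-1}(m)}b_n\leq\eta(A_m-A_{m-1})$ is not what telescoping yields. Writing $f^{-1}(m)=\{p,\dots,q\}$, you have $\sum_{n=p}^{q}b_n=B_q-B_{p-1}$ with $B_q\leq\eta A_m$ but only $B_{p-1}\leq\eta A_{m-1}$ (minimality of $p$), which is an upper bound in the wrong direction; the usable lower bound is $B_{p-1}=B_p-b_p>\eta A_{m-1}-b_p$, leaving $\sum_{n\in f^{-1}(m)}b_n\leq\eta a_m+b_p$. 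The boundary terms $b_{p_m}$ do not obviously sum over an arbitrary $C\in\I_A$: the naive estimate $b_{p_m}\leq b_m$ reduces the problem to $\I_A\subseteq\I_B$, which is genuinely \emph{stronger} than $\I_A\leq_K\I_B$ and fails for some pairs of Waterman sequences satisfying (a) (one can build nonincreasing $A,B$ with $B_n\leq\eta A_n$ but with a set $C$ for which $\sum_{n\in C}a_n<\infty$ while $\sum_{n\in C}b_n=\infty$, by alternating blocks where $b_n/a_n\to\infty$ with long padding blocks that keep the partial sums comparable). So this step needs a real argument controlling the boundary terms, not just the assertion of the telescoped inequality.

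For (b)$\Rightarrow$(a) you have only a sketch, and the two steps you wave at are exactly where the difficulty lies. First, $\I_A\subseteq\I_W$ (with $w_m=\sum_{n\in f^{-1}(m)}b_n$) does \emph{not} give $w_m=O(a_m)$; the correct consequence is only that $w_m\leq Ka_m$ outside a set on which $W$ is summable, since $\limsup w_m/a_m=\infty$ alone does not let you extract indices with $\sum a_{n_k}<\infty$ and $\sum w_{n_k}=\infty$ (the $w_{n_k}$ may themselves be summable). That weaker form does still yield $\sum_{m\leq n}w_m=O(A_n)$ because $A_n\to\infty$. But second, and more seriously, $\sum_{m\leq n}w_m$ is the $b$-mass of $f^{-1}[\{1,\dots,n\}]$, which for an arbitrary Kat\v{e}tov witness $f$ bears no controlled relation to an initial segment $\{1,\dots,N\}$; passing from there to $B_N=O(A_N)$ requires either taming $f$ first or a different extraction (e.g., working with the images $f[\{1,\dots,n_j\}]$ and exploiting that $a$ is nonincreasing so that any set of size $n_j$ has $a$-mass at most $A_{n_j}$). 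You explicitly flag this as ``the main obstacle,'' and as submitted it is unresolved, so the proposal does not yet constitute a proof of (a)$\Leftrightarrow$(b).
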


Equivalence of items (a) and (c) from the above result was proved earlier by Perlman and Waterman (see \cite[Theorem 3]{PerlmanWaterman}). We will also need the following simple observation.

\begin{remark}
\label{rem:inclusions}
Recall that the definition of $\BBV$ requires that $\sum_{i=1}^\infty b_i=\infty$. However, if $A=(a_n)_{n\in\N}$ is a Waterman sequence, but $B=(b_n)_{n\in\N}$ is any sequence of positive real numbers such that the series $\sum_{i=1}^\infty b_i$ is convergent, then equivalence of items (a) and (b) from Theorem \ref{th:inclusions} still holds true (since $\sum_{i=1}^n b_i = O(\sum_{i=1}^n a_i)$ and $\I_A\leq_K\mathcal{P}(\N)=\I_B$). Moreover, even if  $\BBV$ is not a Waterman space in this case, we get $BV(\phi_B)=B(I)$, so the inclusion given in (c), i.e. $BV(\phi_A)\subseteq BV(\phi_B)$, holds true as well.
\end{remark}

\subsection{Compact embeddings of Waterman spaces}

In this paper we are going to investigate when the embedding $\ABV\subseteq\BBV$ is compact (i.e. making bounded subsets of $\ABV$ relatively compact in $\BBV$). The quick and simple answer is that this embedding is never compact: the set 
\[
K = \{ \chi_{[0,t]} : t\in [0,1) \} \subseteq \BV(I)\subseteq \ABV
\]
is a set which is bounded in all $\ABV$ spaces and not relatively compact in any of them (nor in $B(I)$).

But the problem of compact embedding may be investigated more carefully. 
Note that given two Waterman sequences $A$ and $B$ such that $\ABV\subseteq\BBV\subseteq B(I)$ (where the inclusions are actually continuous embeddings), every bounded $K\subseteq\ABV$, which is relatively compact in $\BBV$, has to be relatively compact in the $\Vert\cdot\Vert_\infty$ norm.  This is an easy observation implied by the fact that the norm in $\ABV$ is stronger than both the supremum norm $\Vert\cdot\Vert_\infty$ and the norm in $\BBV$. Hence, the necessary condition for the compactness in $\BBV$ is the compactness in $\Vert\cdot\Vert_\infty$ norm. A result in this direction was given  in \cite[Proposition 5]{BGK2} where, instead of assuming that set is relatively compact in the supremum norm, authors assumed that the functions in a set are equicontinuous.  

The results mentioned above may be easily generalized, actually by repeating the ideas of the proof given in the mentioned paper.
\begin{theorem}\label{th:compact}
Let $A$ and $B$ be two Waterman sequences such that $B=o(A)$, i.e. $\lim_{n\to+\infty} \frac{b_n}{a_n} = 0$.  Let $K\subseteq \ABV$ be a bounded subset of $\ABV$, which is relatively compact in the  supremum norm. Then $K\subseteq\BBV$ is relatively compact.
\end{theorem}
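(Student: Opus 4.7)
The plan is to establish sequential compactness: given any sequence $(x_k) \subseteq K$, extract a subsequence convergent in $\BBV$. Since $K$ is relatively compact in $\|\cdot\|_\infty$, I first pass to a subsequence (still denoted $(x_k)$) converging uniformly to some function $x\colon I\to\R$. By lower semicontinuity of the $A$-variation under pointwise convergence (and the uniform bound $\var_A(x_k)\leq M$ coming from boundedness of $K$ in $\ABV$), the limit satisfies $\var_A(x)\leq M$, so $x\in\ABV$. Moreover, $B=o(A)$ trivially gives $b_n=O(a_n)$, hence $\sum_{i=1}^n b_i = O(\sum_{i=1}^n a_i)$, so by Theorem~\ref{th:inclusions} we obtain $x\in\BBV$, and the target norm $\|x_k-x\|_{\BBV}$ is well-defined.

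The heart of the argument is to prove $\|x_k - x\|_{\BBV}\to 0$. The term $|x_k(0)-x(0)|$ tends to $0$ by uniform convergence, so it suffices to bound $\var_B(x_k-x)$. Fix $\varepsilon > 0$ and use $B=o(A)$ to choose $N=N(\varepsilon)$ such that $b_n\leq\varepsilon a_n$ for all $n>N$. For any partition $J=(I_n)_{n\in\N}\in\mathcal{P}_I$, I split
\[
\sum_{n=1}^{\infty} b_n|(x_k-x)(I_n)| = \sum_{n=1}^{N} b_n|(x_k-x)(I_n)| + \sum_{n=N+1}^{\infty} b_n|(x_k-x)(I_n)|.
\]
The head is bounded by $2\|x_k-x\|_\infty \sum_{n=1}^N b_n$, which tends to $0$ as $k\to\infty$ since $N$ is fixed. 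For the tail I estimate
\[
\sum_{n=N+1}^{\infty} b_n|(x_k-x)(I_n)| \leq \varepsilon \sum_{n=N+1}^{\infty} a_n|(x_k-x)(I_n)| \leq \varepsilon \sum_{m=1}^{\infty} a_m|(x_k-x)(I_{N+m})| \leq \varepsilon \var_A(x_k-x),
\]
where the second inequality uses the monotonicity $a_{N+m}\leq a_m$ and the last uses $(I_{N+m})_{m\in\N}\in\mathcal{P}_I$. Since $\var_A(x_k-x)\leq \var_A(x_k)+\var_A(x)\leq 2M$, the tail is bounded by $2M\varepsilon$ uniformly in $J$ and $k$. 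Taking the supremum over $J\in\mathcal{P}_I$ and then the limsup in $k$ gives $\limsup_k \var_B(x_k-x)\leq 2M\varepsilon$, and letting $\varepsilon\to 0$ concludes the proof.

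The only real subtlety is the tail estimate: one has to realise that the bound $b_n\leq \varepsilon a_n$ for $n>N$, combined with the nonincreasing character of $(a_n)$, allows re-indexing $(I_{N+m})_{m\in\N}$ as a legitimate element of $\mathcal{P}_I$ and hence lets one control the tail sum with weights $a_{N+m}$ by the genuine $A$-variation of $x_k-x$. Beyond that, the proof is a fairly routine interplay between the $A$-variation bound (which controls the tail) and uniform convergence (which controls the finitely many initial terms), exactly in the spirit of \cite[Proposition~5]{BGK2}.
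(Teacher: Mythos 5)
Your proof is correct and follows essentially the same route as the paper's: pass to a uniformly convergent subsequence, verify the limit lies in $\ABV$ (the paper cites Helly's selection theorem where you use lower semicontinuity of $\var_A$ under pointwise convergence, both of which work), and then split each sum $\sum_n b_n|(x_k-x)(I_n)|$ into a head controlled by uniform convergence and a tail controlled by $b_n\leq\varepsilon a_n$ together with the bound $\var_A(x_k-x)\leq 2M$. Your re-indexing of $(I_{N+m})_{m\in\N}$ in the tail is harmless but unnecessary, since $\sum_{n>N}a_n|(x_k-x)(I_n)|$ is already dominated by the full sum $\sum_{n\geq 1}a_n|(x_k-x)(I_n)|\leq\var_A(x_k-x)$, which is how the paper argues.
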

\begin{proof}Let us take any sequence $(x_n)_{n\in\mathbb{N}}\subseteq K \subseteq \ABV$. Since $K$ is relatively compact in the supremum norm, taking an appropriate subsequence we may assume that $\norm{x_n-x}_\infty\to 0$ for some bounded function $x\in B(I)$. We are going to show that $x\in \BBV$ and that $\var_B(x_n-x)\to 0$.

We may obviously take such  positive constant $C>0$ that $b_n\leq C a_n$ for all $n\in\N$. This implies that
\[
\var_{B}(y) \leq C \var_A(y)
\]
for any function $y\in\ABV$. Since the sequence $(x_n)_{n\in\mathbb{N}}$ is bounded in $\ABV$, we may assume that there exists such constant $M>0$ that $\var_A(x_n)\leq M$ and by taking the appropriate subsequence (by a version of Helly selection theorem, cf. \cite[Theorem 5]{Wat1976})  we can say that $\var_A(x)\leq M$ and 
\[
\var_B(x) \leq C\var_A(x) \leq CM.
\] 
Now we know that $x_n$ converges uniformly to $x$ and that $x\in\BBV$. What remains is the proof that $\var_B(x_n-x)\to 0$. We may basically repeat the estimates from the proof of \cite[Proposition 5]{BGK2}: let us take any $\varepsilon>0$ and such $j\in \N$ that $b_i \leq \frac{\varepsilon}{4M} a_i$ for all $i\geq j$. Since $\norm{x_n-x}_\infty\to 0$, we may assume that there exists $k_0\in\N$ such that  for all intervals $J\subseteq I$ and all $k\geq k_0$ we have  
\[
|(x_k-x)(J)| \leq 2\norm{x_k-x}_\infty < \frac{\varepsilon}{2\sum_{i=1}^{j} b_i}.
\]
Let us now take any $k>k_0$ and any sequence $(I_i)_{i\in\N}$ of nonoverlapping closed intervals and estimate
\[
\sum_{i\in \N} b_i|(x_k-x)(I_i)| = \sum_{i=1}^j  b_i|(x_k-x)(I_i)| + \sum_{i=j+1}^{+\infty} b_i|(x_k-x)(I_i)| \leq 
\]
\[
\frac{\varepsilon}{2} + \frac{\varepsilon}{4M} \sum_{i=j+1}^{+\infty} a_i|(x_k-x)(I_i)| \leq \frac{\varepsilon}{2} + \frac{\varepsilon}{4M}2M = \varepsilon,
\]
which completes the proof.
\end{proof}

In addition to the above, in \cite{BCGS} the Authors tried to reverse this result: the question was if for any compact $K\subseteq \BBV$, there exists a subspace $\ABV\subsetneq \BBV$ such that $K\subseteq \ABV$. The answer was given for a certain class of  $\ABV$ spaces (see \cite[Theorem 18]{BCGS}). 

In this paper we are going to extend Theorem \ref{th:compact} in a way that  will give a complete characterization of sequences $A$ and $B$ satisfying the modified compact embedding property, i.e. such that bounded subsets of $\ABV$, which are relatively compact in the supremum norm, are relatively compact in $\BBV$.

\begin{lemma}\label{lem:comp:emb2}Assume we have two Waterman sequences $A=(a_n)_{n\in\N}$ and $B=(b_n)_{n\in\N}$ and that $B$ is proper. The following conditions are equivalent:
\begin{enumerate}
\item[(a)] $\sum_{k=1}^n b_k = o(\sum_{k=1}^n a_k)$, i.e.
\[
\lim_{n\to +\infty} \frac{\sum_{k=1}^n b_k}{\sum_{k=1}^n a_k}=0,
\]
\item[(b)] for any $\varepsilon>0$ there exists $k_0\in\N$ such that for all nonincreasing, converging to $0$ sequences $(c_n)_{n\in N}\subseteq[0,+\infty)$ such that $\sum_{n=1}^{\infty} a_nc_n < +\infty$ the inequality holds
\[
\sum_{n=k_0+1}^{+\infty} b_nc_n < \varepsilon \sum_{n=1}^{+\infty} a_n c_n. 
\]
\end{enumerate}
\end{lemma}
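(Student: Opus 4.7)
The plan is to reduce both implications to a single Abel-summation identity. Writing $A_N=\sum_{k=1}^N a_k$ and $B_N=\sum_{k=1}^N b_k$, condition (a) just says $B_N/A_N\to 0$. For any nonincreasing sequence $(c_n)\subseteq[0,+\infty)$ with $c_n\to 0$, set $d_k=c_k-c_{k+1}\geq 0$, so that $c_n=\sum_{k\geq n}d_k$ by telescoping. A Tonelli interchange of summation then gives the layer-cake identities
\[
\sum_{n=1}^{\infty} a_n c_n=\sum_{k=1}^{\infty}d_k A_k,\qquad \sum_{n=k_0+1}^{\infty}b_n c_n=\sum_{k=k_0+1}^{\infty}d_k\,(B_k-B_{k_0}),
\]
which will be the engine for both directions.

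For the implication (a)$\Rightarrow$(b), I would fix $\varepsilon>0$ and use (a) to pick $k_0$ so large that $B_k\leq (\varepsilon/2)A_k$ for every $k>k_0$ (using the slightly smaller constant will provide the strict inequality later). Bounding $B_k-B_{k_0}\leq B_k$ and plugging into the second identity yields
\[
\sum_{n=k_0+1}^{\infty}b_n c_n\leq \sum_{k=k_0+1}^{\infty}d_k B_k\leq \frac{\varepsilon}{2}\sum_{k=1}^{\infty}d_k A_k=\frac{\varepsilon}{2}\sum_{n=1}^{\infty}a_n c_n,
\]
which is strictly less than $\varepsilon\sum_{n=1}^\infty a_n c_n$ in the only nontrivial case $\sum a_n c_n>0$; this is (b).

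For (b)$\Rightarrow$(a), I would test (b) on the family of step sequences $c_n^{(M)}=1$ for $n\leq M$ and $c_n^{(M)}=0$ for $n>M$. Each such sequence is nonincreasing, converges to $0$, and satisfies $\sum_{n=1}^\infty a_n c_n^{(M)}=A_M<\infty$. Given an arbitrary $\varepsilon>0$, (b) produces $k_0$ so that for every $M>k_0$
\[
B_M-B_{k_0}=\sum_{n=k_0+1}^{\infty}b_n c_n^{(M)}<\varepsilon\, A_M.
\]
Dividing by $A_M$ and passing to $\limsup_{M\to\infty}$, and using that $A_M\to\infty$ because $A$ is a Waterman sequence, I obtain $\limsup_M B_M/A_M\leq \varepsilon$. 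Since $\varepsilon>0$ was arbitrary, this yields (a).

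The only genuine idea here is spotting the layer-cake decomposition $c_n=\sum_{k\geq n}d_k$; after that, both directions are short computations. The minor technical points -- strictness of the inequality and handling $c\equiv 0$ -- are absorbed by the $\varepsilon/2$ slack and by noting the trivial case holds vacuously. Note that properness of $B$ (i.e.\ $b_n\to 0$) does not appear to be used in the argument; it is presumably inherited from the broader setting in which the lemma is invoked.
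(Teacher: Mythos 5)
Your proof is correct and takes essentially the same route as the paper's: the layer-cake identity $\sum_n a_n c_n=\sum_k (c_k-c_{k+1})\sum_{i\le k}a_i$ you obtain via Tonelli is exactly the Abel summation the paper carries out by explicit telescoping in (a)$\Rightarrow$(b), and your step sequences in (b)$\Rightarrow$(a) are the same indicator test sequences the paper uses. If anything, your $\varepsilon/2$ slack treats the strictness of the inequality in (b) slightly more carefully than the paper, whose computation only yields the non-strict bound.
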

\begin{proof}
Let us first prove the implication (b) $\Rightarrow$ (a). Let us fix $\varepsilon>0$ and take $k_0=k_0(\frac{\varepsilon}{2})\in\N$ as in (b). Now, because $\sum_{n=1}^{\infty} a_n = +\infty$, there exists such $k_1\geq k_0$ that
\[
\sum_{k=1}^{k_0} b_k < \frac{\varepsilon}{2} \sum_{k=1}^{k_1} a_k.
\]
Then for $n\geq k_1$, if $c_k = 1$ for $k=1,...,n$ and $c_k=0$ for $k=n+1, n+2, ...$ (note that the sequence $(c_k)_{k\in\N}$ may be used in the condition (b)), we get
\[
\sum_{k=1}^{n} b_k = \sum_{k=1}^{k_0} b_k + \sum_{k=k_0+1}^n b_k \leq \frac{\varepsilon}{2} \sum_{k=1}^{k_1} a_k + \sum_{k=k_0+1}^{\infty} b_kc_k \leq
\]
\[
\frac{\varepsilon}{2} \sum_{k=1}^{k_1} a_k + \frac{\varepsilon}{2} \sum_{k=1}^{\infty} a_kc_k =  \varepsilon \sum_{k=1}^{k_1} a_k+ \frac{\varepsilon}{2} \sum_{k=k_1}^{n} a_k  \leq  \varepsilon \sum_{k=1}^{n} a_k.
\]
This completes the proof of the first implication.

Let us now proceed to the proof of the implication (a)$\Rightarrow$ (b). Let us fix $\varepsilon>0$. There exists such $k_0\in\N$  that for $n\geq k_0$
\[
\sum_{k=1}^n b_k \leq \varepsilon \sum_{k=1}^n a_k.
\]
Let us now take any nonincreasing sequence $(c_n)_{n\in\N}$ which converges to $0$ and satisfies $\sum_{n=1}^{\infty} a_nc_n <+\infty$ and any $n>k_0$. Then
\[
\sum_{k=k_0+1}^{n} b_kc_k \leq c_{k_0}\sum_{k=1}^{k_0} b_k +  \sum_{k=k_0+1}^{n} b_kc_k = 
\]
\[
c_{k_0}\sum_{k=1}^{k_0} b_k - c_{k_0+1}\sum_{k=1}^{k_0} b_k + c_{k_0+1}\sum_{k=1}^{k_0+1} b_k - c_{k_0+2}\sum_{k=1}^{k_0+1} b_k + c_{k_0+2}\sum_{k=1}^{k_0+2} b_k - ....
\]
\[
...-c_n\sum_{k=1}^{n-1}b_k + c_n\sum_{k=1}^n b_k = 
\]
\[
\sum_{j=k_0}^{n-1} (c_j-c_{j+1})\sum_{k=1}^j b_k + c_n\sum_{k=1}^n b_k \leq
\]
\[
\varepsilon \sum_{j=k_0}^{n-1} (c_j-c_{j+1})\sum_{k=1}^j a_k +  \varepsilon c_n\sum_{k=1}^n a_k =
\]
\[
\varepsilon \left( c_{k_0}\sum_{k=1}^{k_0} a_k + \sum_{k=k_0+1}^n c_ka_k \right)  = \varepsilon \left(\sum_{k=1}^{k_0}  c_{k_0}a_k + \sum_{k=k_0+1}^n c_ka_k \right)  \leq
\]
\[
\varepsilon \left(\sum_{k=1}^{k_0}  c_{k}a_k + \sum_{k=k_0+1}^n c_ka_k \right) = \varepsilon\sum_{k=1}^n c_ka_k.
\]
This means that condition (b) is satisfied and it completes the proof.
\end{proof}

\begin{lemma}\label{lem:comp:emb}
Assume we have two Waterman sequences $A=(a_n)_{n\in\N}$ and $B=(b_n)_{n\in\N}$ and that $B$ is proper. The following statements are equivalent:
\begin{enumerate}
\item[(a)] if $K\subseteq \ABV$ is bounded in $\ABV$ and relatively compact in the $\Vert\cdot\Vert_\infty$ norm then $K\subseteq\BBV$ and $K$ is relatively compact in the space $\BBV$,
\item[(b)] for any $\varepsilon>0$ there exists $k_0\in\N$ such that for all nonincreasing converging to $0$ sequences $(c_n)_{n\in N}\subset[0,+\infty)$ such that $\sum_{n=1}^{\infty} a_nc_n < +\infty$ the inequality holds
\[ 
\sum_{n=k_0+1}^{+\infty} b_nc_n < \varepsilon \sum_{n=1}^{+\infty} a_n c_n.
\]
\end{enumerate}
\end{lemma}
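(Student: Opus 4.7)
The plan is to prove the two implications separately, freely using Lemma \ref{lem:comp:emb2} to pass between (b) and the equivalent partial-sum form $\sum_{k=1}^n b_k=o(\sum_{k=1}^n a_k)$.

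For (b) $\Rightarrow$ (a) I follow the pattern of Theorem \ref{th:compact}, with condition (b) replacing the pointwise bound $b_n\leq\varepsilon a_n$. Given a bounded sequence $(x_k)\subseteq K$ with $M:=\sup_k\var_A(x_k)$, relative compactness in the supremum norm extracts a subsequence converging uniformly to some $x$, and lower semicontinuity of $\var_A$ puts $x$ in $\ABV$ with $\var_A(x)\leq M$. To prove $\var_B(x_k-x)\to 0$, fix $\varepsilon>0$ and apply (b) with $\varepsilon':=\varepsilon/(4M)$ to obtain $k_0$. For any sequence of nonoverlapping intervals I reorder so that $c_j:=|(x_k-x)(I_j)|$ is nonincreasing (the optimal ordering since $(a_j)$ and $(b_j)$ are nonincreasing, by the rearrangement inequality). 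Then $\sum_j a_j c_j\leq\var_A(x_k-x)\leq 2M$, and since $\sum_j a_j=\infty$ the monotone sequence $(c_j)$ tends to $0$, so (b) is applicable and gives the tail bound $\sum_{j>k_0}b_j c_j<\varepsilon'\cdot 2M=\varepsilon/2$. The head $\sum_{j\leq k_0}b_j c_j\leq 2\|x_k-x\|_\infty\sum_{j=1}^{k_0}b_j$ tends to $0$ with $k$ by uniform convergence, completing the estimate. The preliminary inclusion $K\subseteq\BBV$ is the same head/tail split with $\varepsilon':=1$ applied to an arbitrary $x\in K$ in place of $x_k-x$.

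For (a) $\Rightarrow$ (b) I argue by contrapositive. If (b) fails then so does $\sum_{k=1}^n b_k=o(\sum_{k=1}^n a_k)$, yielding $\varepsilon>0$ and integers $N_n\uparrow\infty$ with $\sum_{k=1}^{N_n}b_k\geq\varepsilon\sum_{k=1}^{N_n}a_k$. Set $h_n:=1/\sum_{k=1}^{N_n}a_k$ (so $h_n\to 0$) and let $x_n\colon I\to\R$ consist of $N_n$ disjoint triangular bumps of height $h_n$ packed into $[0,1/n]$, with $x_n\equiv 0$ on $[1/n,1]$. Each bump contributes at most two nonoverlapping intervals on which $|x_n(I)|=h_n$, so by the doubling inequality $\sum_1^{2N}a_j\leq 2\sum_1^N a_j$ (from monotonicity of $(a_j)$) we obtain $\var_A(x_n)\leq h_n\sum_{j=1}^{2N_n}a_j\leq 2$, while choosing one up-slope interval per bump yields $\var_B(x_n)\geq h_n\sum_{j=1}^{N_n}b_j\geq\varepsilon$. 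Consequently $K:=\{x_n:n\in\N\}\cup\{0\}$ is bounded in $\ABV$ and relatively compact in $\|\cdot\|_\infty$ (since $\|x_n\|_\infty=h_n\to 0$), yet $\|x_n\|_{\BBV}\geq\varepsilon$ rules out any $\BBV$-convergent subsequence (its limit would be forced to be $0$ by uniform convergence), so (a) fails.

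The main obstacle is the construction in (a) $\Rightarrow$ (b): one must simultaneously secure $\|x_n\|_\infty\to 0$, $\var_A(x_n)$ bounded, and $\var_B(x_n)\geq\varepsilon$. A pointwise comparison of $A$ and $B$ is too coarse for this — the partial-sum comparison is precisely what matches a function built from many identical bumps, and the scaling $h_n=1/\sum_{k=1}^{N_n}a_k$ together with the doubling inequality is exactly what pins $\var_A(x_n)$ from above and $\var_B(x_n)$ from below at the same time.
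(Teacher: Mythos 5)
Your proof is correct. The direction (b) $\Rightarrow$ (a) is essentially the paper's own argument: Helly plus uniform convergence to locate the limit $x$, then the head/tail split with $k_0$ obtained from (b) at level $\varepsilon/(4M)$, after passing to the nonincreasing rearrangement of $\bigl(|(x_k-x)(I_j)|\bigr)_j$ (the paper, like you, notes that this rearrangement is admissible and only increases the $B$-sum, and that $\sum_j a_j=\infty$ forces it to tend to $0$).

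Where you genuinely diverge is (a) $\Rightarrow$ (b). The paper negates (b) directly: for each $k$ it takes a nonincreasing null sequence $(c_n^{(k)})_{n\in\N}$ normalized by $\sum_n a_nc_n^{(k)}=1$ with a bad tail past $k$, and encodes it into a single zig-zag function on the dyadic intervals $[2^{-n},2^{-n+1}]$ whose oscillations are exactly $c_n^{(k)}$; the normalization forces $\sup|x_k|=c_k^{(k)}\to 0$. You instead route the negation through Lemma \ref{lem:comp:emb2}, so that the failing condition becomes the partial-sum statement $\sum_{k\leq n}b_k=o(\sum_{k\leq n}a_k)$, whose witnesses are $0$--$1$ sequences; this lets you use the much more concrete counterexample of $N_n$ identical bumps of height $h_n=1/\sum_{k\leq N_n}a_k$. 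In effect you are reusing the reduction already carried out inside Lemma \ref{lem:comp:emb2} (whose proof of (b) $\Rightarrow$ (a) tests condition (b) on exactly such indicator sequences), at the price of depending on that lemma; the paper's version is self-contained with respect to condition (b) but needs the slightly more delicate dyadic construction and the observation $\sup_n c_n^{(k)}=c_k^{(k)}\to0$. One step of yours that deserves a line more of justification: the bound $\var_A(x_n)\leq h_n\sum_{j=1}^{2N_n}a_j$ does not follow merely from ``each bump contributes two intervals,'' since an interval system may chop a single monotone slope into many small pieces; one needs the standard majorization step --- the nonincreasing rearrangement $(d_j^*)$ of the oscillations satisfies $\sum_{j\leq m}d_j^*\leq\min(mh_n,2N_nh_n)$ for every $m$, and Abel summation against the nonincreasing weights $(a_j)$ then gives the claim. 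With that supplied, both implications are sound.
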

\begin{proof}
Let us start with the implication (b) $\Rightarrow$ (a). By Lemma \ref{lem:comp:emb2} and Theorem \ref{th:inclusions} we see that $\ABV\subseteq\BBV$. Let us take a bounded set $K\subseteq \ABV$ which is relatively compact in the supremum norm. Let $M>0$ be such that for all $x\in K$ we have $\norm{x}_\ABV\leq M$ and let $(x_n)_{n\in\N}\subseteq K$. By Helly selection theorem (cf. \cite[Theorem 5]{Wat1976}), we can take a subsequence $(x_{n_k})_{k\in\N}$ pointwise convergent to some function $x\in \ABV$ such that $\norm{x}_\ABV\leq M$. Because $K$ is relatively compact in the uniform convergence norm, we can also assume that $(x_{n_k})_{k\in\N}$ converges to $x$ uniformly. 

Now we are going to show that the subsequence $(x_{n_k}-x)_{k\in\N}$ converges to $0$ in the norm of the space $\BBV$. Let us take $\varepsilon>0$. Of course it is enough to show that $\var_B(x_{n_k}-x)\leq\varepsilon$ for $k\in\N$ big enough. For $\varepsilon / (4M)$ there exists $k_0\in\N$ as in condition (b). For $k_0$ there exists such $k_1\in\N$  that
\[
\norm{x_{n_k}-x}_\infty \leq \frac{\varepsilon}{4\sum_{i=1}^{k_0} b_i}
\]
for $k>k_1$.

In order to estimate the variation $\var_B(x_{n_k}-x)$, let us take any sequence $(I_n)_{n\in\N}$ of nonoverlapping closed intervals and fix $k>k_1$. We may assume that the sequence
\[
c_n = |(x_{n_k}-x)(I_n)|
\]
is nonincreasing. We can see that $\sum_{n=1}^{\infty} a_nc_n \leq 2M$ and that $c_n\to 0$. 

Let us estimate
\[
\sum_{n=1}^{\infty} b_nc_n = \sum_{n=1}^{k_0} b_nc_n +  \sum_{n=k_0+1}^{\infty} b_nc_n \leq
\]
\[
2\norm{x_{n_k}-x}_\infty\left( \sum_{n=1}^{k_0} b_n\right) + \frac{\varepsilon}{4M} \sum_{n=1}^{+\infty} a_n c_n \leq \frac{\varepsilon}{2} + \frac{\varepsilon}{2} = \varepsilon.
\]

Let us now proceed to the proof of the implication (a)$\Rightarrow$ (b). Contrary to our claim, assume that (a) holds true and that there exists $\varepsilon_0>0$ such that for all $k\in\N$ there exists such a nonincreasing and converging to $0$ sequence $(c_n^{(k)})_{n\in\N}$ that $\sum_{n=1}^{\infty} a_n c_n^{(k)}<+\infty$ and
\begin{equation}\label{cond:not:anbncn}
\sum_{n=k+1}^{+\infty} b_nc_n^{(k)} \geq \varepsilon_0  \sum_{n=1}^{\infty} a_n c_n^{(k)}.
\end{equation}
In the condition \eqref{cond:not:anbncn} we may decrease the right-hand side by taking $c_n^{(k)} = c_k^{(k)}$ for all $n=1,...,k-1$. So we can say that the condition holds true for a sequence $(c_n^{(k)})_{n\in\N}$ having the first $k$ terms constant.
As we can see the condition \eqref{cond:not:anbncn} is homogeneous with respect to the sequence $(c_n^{(k)})_{n\in\N}$, so we can assume that $\sum_{n=1}^{\infty} a_n c_n^{(k)} = 1$.

Let us observe that there must be 
\[
\lim_{k\to+\infty} \left( \sup_{n\in\N} |c_n^{(k)}|\right) = \lim_{k\to +\infty} c_k^{(k)} = 0.
\]
This is because $ \sum_{n=1}^k a_n\to\infty$ and 
\[
1 = \sum_{n=1}^\infty a_n c_n^{(k)} \geq \sum_{n=1}^{k} a_n c_n^{(k)} = c_k^{(k)} \sum_{n=1}^k a_n. 
\]

Let us now take the sequence of dyadic intervals $I_n = [\frac{1}{2^n},\frac{1}{2^{n-1}}]$ and define functions $x_k\colon I\to\R$ as piecewise linear functions such that $x_k(1) = 0$, 
$x_k(\frac{1}{2}) = c_1^{(k)}$, $x_k(\frac{1}{4}) =  c_1^{(k)} - c_2^{(k)}$ and in general
\[
x_k\left(\frac{1}{2^{n}}\right) = \sum_{i=1}^n (-1)^{i+1}c_i^{(k)}.
\]

As we can see using the Leibniz criterion, the series $\sum_{i=1}^{+\infty} (-1)^{i+1}c_i^{(k)}$ converges. Hence, when we set $x_k(0) = \sum_{i=1}^{+\infty} (-1)^{i+1}c_i^{(k)}$, we can see that the function $x_k$ is actually continuous. Moreover, $|x_k(I_n)| = c_n^{(k)}$ for all $n,k\in\N$. Consequently, $\norm{x_k}_\ABV=\sum_{n=1}^{\infty} a_n c_n^{(k)} = 1$ for each $k\in\N$, so $K=\{x_k:k\in\N\}$ is bounded in $\ABV$.

We can also observe that 
\[
\sup_{t\in I} |x_k(t)| = c_1^{(k)} = c_k^{(k)} 
\]
and tends to $0$ as $k\to +\infty$, so the sequence $(x_k)_{k\in\N}$ converges uniformly to $0$, thus $K$ is relatively compact in the supremum norm. If $K\not\subseteq \BBV$ then we immediately obtain the contradiction with (a), so we may assume otherwise.  We are now going to show that $(x_k)\subseteq \BBV$ does not converge to $0$ (so neither to any other function) in $\norm{\cdot}_\BBV$.

Let us estimate
\[
\var_B(x_k - 0) \geq \sum_{n=1}^{+\infty} |x_k(I_n)|b_n = \sum_{n=1}^{+\infty} c_n^{(k)}b_n \geq 
\]\[
\geq\sum_{n=k+1}^{+\infty} c_n^{(k)}b_n \geq\varepsilon_0  \sum_{n=1}^{\infty} a_n c_n^{(k)} = \varepsilon_0.
\]
This means that none of the subsequences of $(x_k)$ converges to $0$ in $\BBV$. But if any subsequence converges to some $x\in\BBV$, then there must be $x=0$. This means that the set $K=\{x_k : k\in\N\}$ contradicts item (a). This contradiction completes the proof.
\end{proof}

\begin{definition}
If $A=(a_n)_{n\in\N}$ is a sequence of positive real numbers and $M=(m_k)_{k\in\N}$ is an increasing sequence of natural numbers, then we define a new sequence $A^M=(a^M_n)_{n\in\N}$ of positive real numbers by $a_n^M=k a_n$, where $k\in\N$ is given by $m_k\leq n<m_{k+1}$ (and we put $a_n^M=a_n$ for all $n<m_1$).
\end{definition}

\begin{lemma}\label{lem:comp:emb:ideals}
Assume we have two Waterman sequences $A=(a_n)_{n\in\N}$ and $B=(b_n)_{n\in\N}$ and that $B$ is proper. The following statements are equivalent:
\begin{enumerate}
\item[(a)] $\I_A\leq_K\I_{B^M}$ for some increasing sequence $M=(m_k)_{k\in\N}$ of natural numbers,
\item[(b)] $\sum_{k=1}^n b_k = o(\sum_{k=1}^n a_k)$, i.e.
\[
\lim_{n\to +\infty} \frac{\sum_{k=1}^n b_k}{\sum_{k=1}^n a_k}=0.
\]
\end{enumerate}
\end{lemma}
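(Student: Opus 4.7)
My plan is to prove $(a)\Leftrightarrow(b)$ through the intermediate condition
\[
(*)\qquad \text{there exists } M \text{ such that } B^M(n)=O(A(n)),
\]
abbreviating $A(n)=\sum_{i=1}^n a_i$, $B(n)=\sum_{i=1}^n b_i$, and $B^M(n)=\sum_{i=1}^n b_i^M$. The flow will be $(a)\Rightarrow(*)\Rightarrow(b)$, followed by $(b)\Rightarrow(a)$ directly.

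For $(a)\Rightarrow(*)$, given $f:\N\to\N$ witnessing $\I_A\leq_K\I_{B^M}$, I would form the bucket weights $w_k=\sum_{n\in f^{-1}[\{k\}]}b_n^M$, which are finite since $\{k\}\in\I_A$; the Kat\v{e}tov reduction then gives $\I_A\subseteq\I_W$. Passing to the nonincreasing rearrangement $\tilde W$ of $W$---a Waterman sequence Kat\v{e}tov-equivalent to $W$ via the sorting permutation---Theorem~\ref{th:inclusions} applied to the Waterman pair $(A,\tilde W)$ would yield $\sum_{k\leq N}\tilde w_k=O(A(N))$. Since $|f([1,N])|\leq N$ and the $\tilde w_k$ majorize any $N$-element subsum of $(w_k)$, one obtains
\[
B^M(N)\;\leq\;\sum_{k\in f([1,N])}w_k\;\leq\;\sum_{k\leq N}\tilde w_k\;\leq\;CA(N).
\]
For $(*)\Rightarrow(b)$, the inequality $b_n^M\geq kb_n$ for $n\geq m_k$ gives $k(B(N)-B(m_k-1))\leq B^M(N)\leq CA(N)$; for fixed $k$, sending $N\to\infty$ kills the $B(m_k-1)/A(N)$ term, so $\limsup_N B(N)/A(N)\leq C/k$, and then $k\to\infty$ yields (b).

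For $(b)\Rightarrow(a)$, I would choose $m_k$ strictly increasing with two properties: first, $B(n)\leq A(n)/(k\cdot 2^k)$ for $n\geq m_k$ (achievable by (b)), which together with $b_n^M\leq kb_n$ on $[m_k,m_{k+1})$ yields $B^M(N)\leq A(N)$ for all $N$; second, $b_{m_k}$ is small enough that the nonincreasing rearrangement $\tilde B^M$ of $B^M$ satisfies $\sum_{n\leq N}\tilde b_n^M=O(A(N))$. Once the second property is in hand, Theorem~\ref{th:inclusions} applied to the Waterman pair $(A,\tilde B^M)$ will give $\I_A\leq_K\I_{\tilde B^M}\equiv_K\I_{B^M}$ (the equivalence via the sorting permutation).

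The main obstacle will be securing the second property above: $B^M$ is not in general a Waterman sequence---its monotonicity fails at each block boundary $n=m_{k+1}$, where $b_n^M$ jumps from $kb_{m_{k+1}-1}$ up to $(k+1)b_{m_{k+1}}$---so the partial-sum bound on $B^M$ does not automatically pass to $\tilde B^M$. When $a_n/b_n\to\infty$ (the generic case), the cleanest route will be a layer-cake comparison: choose $m_k$ so that $b_n^M\leq Da_n$ pointwise (which requires $k\leq Da_n/b_n$ on $[m_k,m_{k+1})$), whence the multiset inequality forces $\tilde b_n^M\leq Da_n$ and thus $\sum_{n\leq N}\tilde b_n^M\leq DA(N)$. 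Pathological cases where $a_n/b_n$ stays bounded along a subsequence will require a finer block-by-block estimate showing that the top values of $B^M$ concentrate in boundedly many $M$-blocks near any given threshold.
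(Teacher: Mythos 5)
Your step $(*)\Rightarrow(\mathrm{b})$ is correct, and the bucket-weight idea in $(\mathrm{a})\Rightarrow(*)$ is a genuinely different (and nice) way to exploit the Kat\v{e}tov reduction than the paper's, which instead replaces $B^M$ by its running minimum $c_n=\min\{b^M_i:m_1\leq i\leq n\}$ and applies Theorem~\ref{th:inclusions} to that monotone minorant. However, your $(\mathrm{a})\Rightarrow(*)$ has an unjustified step: the nonincreasing rearrangement $\tilde W$ need not exist, since nothing you have said prevents $(w_k)_{k\in\N}$ from being unbounded (indeed, if $A$ is not proper then $\I_A=\Fin$, the reduction carries no information, and $W$ can be unbounded, in which case $(*)$ can actually fail for the given $M$ even though (a) and (b) hold). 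The fix is to treat non-proper $A$ separately (then $b_n=o(a_n)$ trivially, as the paper does) and, for proper $A$, to observe that tallness of $\I_A$ together with $\I_A\subseteq\I_W$ forces $\{k:w_k\geq t\}$ to be finite for every $t>0$ (otherwise an infinite subset of it lying in $\I_A$ would have infinite $W$-sum), so $w_k\to 0$ and the rearrangement exists. You also need to dispose of indices with $w_k=0$. None of this is in your writeup, but it is repairable.

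The serious gap is in $(\mathrm{b})\Rightarrow(\mathrm{a})$. You prove it only under the extra hypothesis $a_n/b_n\to\infty$ and explicitly defer the case where $a_n/b_n$ stays bounded along a subsequence to an unspecified ``finer block-by-block estimate.'' That case is not vacuous and is not a side issue: the remark following Theorem~\ref{thm:compactembeddings} points out that one can have $\sum_{k=1}^n b_k=o(\sum_{k=1}^n a_k)$ while $b_n=o(a_n)$ fails, and this is exactly the situation in which the lemma says something beyond Theorem~\ref{th:compact}. So your argument covers only the case already handled by the simpler pointwise condition, and the sketched layer-cake comparison $b^M_n\leq Da_n$ cannot be arranged there. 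The paper avoids the rearrangement problem altogether: it chooses the blocks $m'_k$ so that the capped sequence $c_n=\min\{kb_n,(k-1)b_{m'_k-1}\}$ is itself nonincreasing and tends to $0$ (conditions (iii) and (iv) in its proof), checks $\sum_{i=1}^n c_i=O(\sum_{i=1}^n a_i)$ directly from $c_n\leq kb_n$ and the choice of $n_k$, and then shifts the blocks by one ($m_k=m'_{k+1}$) so that $b^M_n\leq c_n$ pointwise, giving $\I_A\leq_K\I_C\subseteq\I_{B^M}$. Until you supply an argument of comparable strength for the non-generic case, your proof of $(\mathrm{b})\Rightarrow(\mathrm{a})$ is incomplete.
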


\begin{proof}
(a)$\implies$(b): If $A$ is not proper then $\lim_{n\to\infty}a_n\neq 0$ and it is easy to see that in this case $b_n=o(a_n)$ which implies item (b). 

Assume now that $A$ is proper. Observe that $\lim_{n\to\infty}b^M_n=0$ as otherwise $\I_{B^M}$ would not be tall and consequently $\I_A$ would not be tall, which contradicts the fact that $A$ is a proper Waterman sequence.

Define $c_n=b^M_{m_1}$ for all $n<m_1$ and $c_n=\min\{b^M_i:m_1\leq i\leq n\}$ for all $n\geq m_1$. Then $\lim_{n\to\infty}c_n=0$ and $\I_A\leq_K\I_{C}$ (as $\I_A\leq_K\I_{B^M}\subseteq\I_C$, by the fact that $c_n\leq b_n^M$ for all $n\in\N$). Hence, either $\I_C=\mathcal{P}(\N)$ or $C=(c_n)_{n\in\N}$ is a proper Waterman sequence.

Fix $\varepsilon>0$ and let $k\in\N$ be such that $\frac{2\eta}{k}<\varepsilon$, where $\eta>0$ is such that 
\[
\frac{\sum_{i=1}^n c_i}{\sum_{i=1}^n a_i}<\eta
\]
for all $n\in\N$ (such $\eta$ exists by Theorem \ref{th:inclusions} and Remark \ref{rem:inclusions}). 

Observe that there is $j\in\N$ such that $c_n\geq k b_n$ for all $n\geq j$. Indeed, since $\lim_{n\to\infty}c_n=0$ and $C$ is nonincreasing, there is some $j\geq m_k$ such that $c_j=b^M_j\geq k b_j$. Then for each $n\geq j$ we have $c_n=b^M_l$ for some $l\in\N$ such that $n\geq l\geq j\geq m_k$, so $c_n=b^M_l\geq kb_l\geq k b_n$.

Let $n_\varepsilon>j$ be such that 
\[
\frac{k\sum_{i=1}^{j} b_i}{\sum_{i=1}^{n_\varepsilon} a_i}<\eta.
\]
Then for every $n\geq n_\varepsilon$ we have
\[
\frac{k\sum_{i=1}^{n} b_i}{\sum_{i=1}^{n} a_i}\leq\frac{k\sum_{i=1}^{j} b_i}{\sum_{i=1}^{n_\varepsilon} a_i}+\frac{\sum_{i=j+1}^{n} c_i}{\sum_{i=1}^{n} a_i}<2\eta.
\]
Thus, 
\[
\frac{\sum_{i=1}^{n} b_i}{\sum_{i=1}^{n} a_i}<\frac{2\eta}{k}<\varepsilon.
\]

(b)$\implies$(a): From condition (b) we know that for every $k\in\N$ there is $n_k\in\N$ such that 
\[
\frac{\sum_{i=1}^n b_i}{\sum_{i=1}^n a_i}<\frac{1}{k}
\]
for all $n\geq n_k$. Define $m'_1=n_1$ and for each $k\in\N$ let $m'_{k+1}\in\N$ be such that:
\begin{itemize}
    \item[(i)] $m'_{k+1}\geq n_{k+1}$,
    \item[(ii)] $m'_{k+1}>m'_{k}$,
    \item[(iii)] $k b_{m'_{k+1}-1}<\frac{1}{k}$,
    \item[(iv)] $k b_{m'_{k+1}-1}<(k-1)b_{m'_k-1}$.
\end{itemize}
Such $m'_{k+1}$ exists as $\lim_{n\to\infty}b_n=0$.

Put $c_n=\min\{k b_n,(k-1)b_{m'_{k}-1}\}$, where $k\geq 2$ is given by $m'_k\leq n<m'_{k+1}$ (and $c_n=b_n$ for $n<m'_2$). We will show that $\I_A\leq_K\I_C$. This is obvious if $\sum_{i=1}^\infty c_i<\infty$ (as in this case $\I_C=\mathcal{P}(\N)$), so assume that $\sum_{i=1}^\infty c_i=\infty$. 

Observe that $\lim_{n\to\infty}b^{M'}_n=0$ (by condition (iii)). Thus, thanks to condition (iv), $C=(c_n)_{n\in\N}$ is a proper Waterman sequence and we can use Theorem \ref{th:inclusions}. Define 
\[
\eta=\max\left\{1, \max\left\{\frac{\sum_{i=1}^n c_i}{\sum_{i=1}^n a_i} : n<m'_1\right\} \right\}.
\]
Let $n\in\N$. If $n<m'_1$ then clearly 
$$\frac{\sum_{i=1}^n c_i}{\sum_{i=1}^n a_i}\leq\eta.$$
 If $n\geq m_1$ then find $k\in\N$ such that $m'_k\leq n<m'_{k+1}$ and observe that
\[
\frac{\sum_{i=1}^n c_i}{\sum_{i=1}^n a_i}\leq \frac{k\sum_{i=1}^n b_i}{\sum_{i=1}^n a_i}<1\leq\eta
\]
(by condition (i)). Thus, $\I_A\leq_K\I_C$ by Theorem \ref{th:inclusions}.

Define now $m_k=m'_{k+1}$ for all $k\in\N$. Then $(m_k)_{k\in\N}$ is increasing (by item (ii)). Moreover, if $m_k=m'_{k+1}\leq i<m_{k+1}$ then $b^M_i=k b_i\leq \min\{(k+1) b_i,kb_{m'_{k+1}-1}\}=c_i$ (as $b_i\leq b_{m'_{k+1}-1}$ by the fact that $(b_i)_{i\in\N}$ is nonincreasing). Hence, $\I_C\subseteq\I_{B^M}$ and consequently $\I_A\leq_K\I_{B^M}$.
\end{proof}

\begin{theorem}
\label{thm:compactembeddings}
Assume that we have two Waterman sequences $A=(a_n)_{n\in\N}$ and $B=(b_n)_{n\in\N}$ and that $B$ is proper. The following statements are equivalent:
\begin{enumerate}
\item[(a)] if $K\subseteq \ABV$ is bounded in $\ABV$ and relatively compact in the $\Vert\cdot\Vert_\infty$ norm then $K\subseteq\BBV$ and $K$ is relatively compact in the space $\BBV$,
\item[(b)] $\I_A\leq_K\I_{B^M}$ for some increasing sequence $M=(m_k)_{k\in\N}$ of natural numbers,
\item[(c)] $\sum_{k=1}^n b_k = o(\sum_{k=1}^n a_k)$, i.e.
\[
\lim_{n\to +\infty} \frac{\sum_{k=1}^n b_k}{\sum_{k=1}^n a_k}=0.
\]
\end{enumerate}
\end{theorem}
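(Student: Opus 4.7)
My plan is to observe that the theorem is essentially an assembly statement: the three preceding lemmas (Lemma \ref{lem:comp:emb2}, Lemma \ref{lem:comp:emb}, Lemma \ref{lem:comp:emb:ideals}) have been arranged so that chaining them directly yields the equivalence of (a), (b), and (c). So the proof should just be a short bookkeeping argument that matches the items across the lemmas.

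More precisely, the scheme I would write is the following. First, for the equivalence (a)$\Leftrightarrow$(c): Lemma \ref{lem:comp:emb} gives that item (a) of the theorem is equivalent to the intermediate condition about nonincreasing null sequences $(c_n)$ with $\sum a_n c_n<\infty$, and Lemma \ref{lem:comp:emb2} gives that the same intermediate condition is equivalent to $\sum_{k=1}^n b_k = o(\sum_{k=1}^n a_k)$, which is exactly (c). So (a)$\Leftrightarrow$(c) follows by transitivity. Second, for the equivalence (b)$\Leftrightarrow$(c): Lemma \ref{lem:comp:emb:ideals} gives exactly this, since its items (a) and (b) are our items (b) and (c).

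In other words, the proof is a two-line citation of \ref{lem:comp:emb}, \ref{lem:comp:emb2} and \ref{lem:comp:emb:ideals}. Since all three lemmas have been stated under the same standing hypothesis (two Waterman sequences $A$ and $B$, with $B$ proper), no extra verification of assumptions is needed and no obstacle is expected; the entire content of the theorem has already been dispatched into the lemmas, which is why the proof should be routine. The genuine difficulty lies in those lemmas themselves, and specifically in the forward Abel-summation calculation in Lemma \ref{lem:comp:emb2} and in the delicate construction of the auxiliary sequence $M$ in the (b)$\Rightarrow$(a) part of Lemma \ref{lem:comp:emb:ideals}; at the level of the theorem itself, there is nothing more to do than to combine these results.
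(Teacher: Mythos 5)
Your proposal is correct and coincides with the paper's own proof, which likewise just cites Lemma \ref{lem:comp:emb} together with Lemma \ref{lem:comp:emb2} for (a)$\Leftrightarrow$(c) and Lemma \ref{lem:comp:emb:ideals} for (b)$\Leftrightarrow$(c). Your matching of the lemmas' items to the theorem's items is accurate, and you are right that all the substance lives in the lemmas.
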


\begin{proof}
Equivalence of items (a) and (c) follows from Lemmas \ref{lem:comp:emb} and \ref{lem:comp:emb2}. The equivalence of items (b) and (c) is proved in Lemma \ref{lem:comp:emb:ideals}.
\end{proof}

\begin{remark}
The condition $\sum_{k=1}^n b_k = o(\sum_{k=1}^n a_k)$ given in item (c) of Theorem \ref{thm:compactembeddings}
is more general than condition $b_n = o(a_n)$ considered before in Theorem \ref{th:compact} -- one can easily construct two Waterman sequences $A$ and $B$ such that $B$ is proper, $\sum_{k=1}^n b_k = o(\sum_{k=1}^n a_k)$, but $b_n = o(a_n)$ does not hold.
\end{remark}

\section{Chanturia classes}

\subsection{Submeasures and ideals associated to Chanturia classes}

\begin{definition}
An ideal is called a \emph{simple density ideal} if it is equal to $\Exh(\phi_g)$ for some lsc submeasure $\phi_g$ of the form
\[
\phi_g(C)=\sup_{n\in\N}\frac{|C\cap\{1,2,\ldots,n\}|}{g(n)},
\]
where $g:\N\to[0,\infty)$ is a function satisfying the following conditions:
\begin{itemize}
    \item[(a)] $(g(n))_{n\in\N}$ is nondecreasing;
    \item[(b)] $\lim_n g(n)=\infty$;
    \item[(c)] $(\frac{n}{g(n)})_{n\in\N}$ does not tend to zero.
\end{itemize}
 \end{definition}

\begin{remark}
\label{rem-simpledensity}
Note that for every function $g:\N\to[0,\infty)$ satisfying conditions (a)-(c) we have
\[
\Exh(\phi_g)=\left\{C\subseteq\N: \lim_{n\to\infty}\frac{|C\cap\{1,2,\ldots,n\}|}{g(n)}=0\right\}.
\]
Indeed, if $\lim_{n\to\infty}\frac{|C\cap\{1,2,\ldots,n\}|}{g(n)}=0$ and $\varepsilon>0$ is arbitrary, then there is $k_0\in\N$ such that $\frac{|C\cap\{1,2,\ldots,n\}|}{g(n)}\leq\varepsilon$ for all $n\geq k_0$, so also 
\[
\phi_g(C\setminus \{1,2,\ldots,k\})\leq\phi_g(C\setminus \{1,2,\ldots,k_0\})\leq\varepsilon,
\]
for every $k\geq k_0$, since for $n<k_0$ we get
\[
\frac{|(C\setminus \{1,2,\ldots,k_0\})\cap\{1,2,\ldots,n\}|}{g(n)}=0,
\]
and for $n\geq k_0$ we have 
\[
\frac{|(C\setminus \{1,2,\ldots,k_0\})\cap\{1,2,\ldots,n\}|}{g(n)}\leq\frac{|C\cap\{1,2,\ldots,n\}|}{g(n)}\leq\varepsilon.
\]
On the other hand, if $C\in\Exh(\phi_g)$ and $\varepsilon>0$ is arbitrary, then there is $k_0\in\N$ such that $\phi_g(C\setminus \{1,2,\ldots,k_0\})\leq\frac{\varepsilon}{2}$ and there is also $n_0\in\N$ such that $\frac{k_0}{g(n)}\leq\frac{\varepsilon}{2}$ for all $n\geq n_0$, so for $n\geq n_0$ we get
\[
\frac{|C\cap\{1,2,\ldots,n\}|}{g(n)}\leq \frac{k_0+|(C\setminus \{1,2,\ldots,k_0\})\cap\{1,2,\ldots,n\}|}{g(n)}\leq \varepsilon.
\]
\end{remark}

Simple denisty ideals have been extensively studied in \cite{simpleden3}, \cite{simpleden2} and \cite{simpleden1}. In \cite{GKT} we have studied spaces of the form $\BV(\phi_g)$ -- for some technical reasons, we needed one additional assumption:
\begin{itemize}
    \item[(d)] $(\frac{n}{g(n)})_{n\in\N}$ is nondecresing.
\end{itemize}
Note that (c) follows from (d). 

As the sequence  $(\frac{n}{g(n)})_{n\in\N}$ is nondecreasing, it has a limit: either finite or infinite. In case of a finite limit we can see that the space $\BV(\phi_g)$ actually equals to the space of all bounded functions and we will eliminate this case from our considerations. Hence, let us now assume that 
\begin{itemize}
\item[(e)] $\lim_{n\to+\infty} \frac{n}{g(n)} = +\infty$.
\end{itemize}

Assumptions (d) and (e) will be valid in the rest of this paper. We will denote by $\mathcal{G}$ the set of all functions $g:\N\to[0,\infty)$ satisfying conditions (a)-(e). 
%Note that in this case $\hat{\phi}_g(x)=\sup_{n\in\N}\frac{\sum_{i=1}^n|x_i|}{g(n)}$.

As it is shown in \cite{GKT}, the space $\BV(\phi_g)$ corresponds to Chanturia class introduced in Section \ref{sec:Chanturia}, so we can write $V[g(n)] = BV(\phi_g)$. Moreover, in \cite{GKT} we have shown the following.

\begin{theorem}[{\cite[Theorem 5.1]{GKT}}]
\label{th:Chant:embed}
The following are equivalent for every $g,h\in\mathcal{G}$:
\begin{itemize}
\item[(a)] $g(n) = O(h(n))$, i.e., there is $\eta>0$ such that $\frac{g(n)}{h(n)}\leq\eta$ for all $n\in \N$,
\item[(b)] $\Exh(\phi_g)\subseteq\Exh(\phi_h)$,
\item[(c)] $V[g(n)]\subseteq V[h(n)]$.
\end{itemize}
\end{theorem}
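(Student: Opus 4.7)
The plan is to prove the cycle by two easy majorizations (a)$\Rightarrow$(b) and (a)$\Rightarrow$(c), and parallel constructive converses not(a)$\Rightarrow$not(b) and not(a)$\Rightarrow$not(c); the remaining link (b)$\Leftrightarrow$(c) then follows automatically.

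For the forward directions, assume $g(n)\leq\eta h(n)$. Dividing through gives $1/h(n)\leq\eta/g(n)$; multiplying by $|C\cap\{1,\ldots,n\}|$ and taking the sup in $n$ yields the pointwise submeasure bound $\phi_h(C)\leq\eta\phi_g(C)$, whence $\Exh(\phi_g)\subseteq\Exh(\phi_h)$. Similarly, $v(x,n)\leq M g(n)$ implies $v(x,n)\leq M\eta h(n)$, giving $V[g(n)]\subseteq V[h(n)]$.

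For the converses, suppose $g(n)/h(n)$ is unbounded and extract a rapidly growing sequence $(n_k)$ with $g(n_k)/h(n_k)>k^2$ and $h(n_{k-1})\leq h(n_k)/2^k$ (the latter is arranged using $h(n)\to\infty$, which is built into the definition of $\mathcal{G}$). To refute (b), set $C=\bigsqcup_k C_k$ with $C_k\subseteq(n_{k-1},n_k]$ and $|C_k|=\lfloor h(n_k)\rfloor$. For any $N$, picking $k$ with $n_{k-1}\geq N$ gives $\phi_h(C\setminus\{1,\ldots,N\})\geq |C_k|/h(n_k)\to 1$, so $C\notin\Exh(\phi_h)$. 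On the other hand, for $N\in(n_{k-1},n_k]$ and $n\in[n_j,n_{j+1})$ with $j\geq k$, the geometric decay yields $|C\cap(N,n]|\leq\sum_{i=k}^{j}|C_i|\leq 2h(n_j)$, and monotonicity of $g$ gives $\phi_g(C\setminus\{1,\ldots,N\})\leq 2\sup_{j\geq k}h(n_j)/g(n_j)\to 0$, so $C\in\Exh(\phi_g)$.

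To refute (c), adapt the piecewise-linear construction from the proof of Lemma \ref{lem:comp:emb}: build a continuous $x\colon[0,1]\to\R$ on the dyadic intervals $I_j=[1/2^j,1/2^{j-1}]$ whose alternating-sign increments realize a prescribed nonincreasing null sequence $(c_j)$, chosen so that $\sum_{j=1}^{n_k}c_j\geq k\cdot h(n_k)$ (forcing $v(x,n_k)/h(n_k)\to\infty$, hence $x\notin V[h(n)]$) while the uniform estimate $\sum_{j=1}^{n}c_j\leq M g(n)$ holds for every $n$ (so $x\in V[g(n)]$). The main obstacle is precisely this balancing act: the sequence $(c_j)$ must be simultaneously nonincreasing, have cumulative sums below a fixed multiple of $g$ at every index (not only the distinguished $n_k$), and satisfy $v(x,n)=\sum_{j=1}^{n}c_j$ (the last identity holds because the supremum over arbitrary partitions of $[0,1]$ is attained by taking the $n$ largest signed jumps of such a monotone piecewise-linear function). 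Feasibility is guaranteed by the gap $g(n_k)\gg k h(n_k)$, which leaves ample room to spread the required mass within each block $(n_{k-1},n_k]$.
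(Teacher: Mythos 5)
First, a point of comparison: the paper does not prove Theorem~\ref{th:Chant:embed} at all --- it is imported from \cite[Theorem 5.1]{GKT} --- so there is no in-paper proof to measure you against; the closest analogues are the constructions in the proof of Theorem~\ref{lem:comp:chant:emb}. Your architecture (two trivial majorizations plus two constructive refutations of (b) and (c) from $\neg$(a)) is sound, and the forward directions (a)$\Rightarrow$(b), (a)$\Rightarrow$(c) are correct as written. The two converses, however, each have a gap. In $\neg$(a)$\Rightarrow\neg$(b), the counting bound $|C\cap(N,n]|\leq\sum_{i=k}^{j}|C_i|$ for $n\in[n_j,n_{j+1})$ is false for roughly $h(n_{j+1})$ values of $n$ near the top of the block, because $C_{j+1}\subseteq(n_j,n_{j+1}]$ contributes partially to $C\cap\{1,\ldots,n\}$ there. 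The extra term is of size up to $\lfloor h(n_{j+1})\rfloor$ divided by $g(n)$ with $n<n_{j+1}$, and monotonicity of $g$ alone does not make it small, since $g$ could a priori grow a lot between $n_{j+1}-h(n_{j+1})$ and $n_{j+1}$. The repair is to place $C_{j+1}$ at the top of its block (or spread it proportionally) and invoke condition (d) of $\mathcal{G}$: since $g(n)/n$ is nonincreasing, $g(n_{j+1}-\lfloor h(n_{j+1})\rfloor)\geq g(n_{j+1})\cdot\frac{n_{j+1}-\lfloor h(n_{j+1})\rfloor}{n_{j+1}}\geq\frac{1}{2}g(n_{j+1})$ for large $j$ (using $h(n)=o(n)$ from (e)), so the extra term is at most $2h(n_{j+1})/g(n_{j+1})<2/(j+1)^2$. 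One should also record that $\lfloor h(n_k)\rfloor\leq n_k-n_{k-1}$, again via (e) and rapid growth.

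In $\neg$(a)$\Rightarrow\neg$(c) you correctly identify the crux --- the simultaneous constraints on $(c_j)$ --- but then only assert feasibility, and the assertion is not automatic from ``$g(n_k)\gg k\,h(n_k)$'' alone: the constraint $\sum_{j\leq n}c_j\leq Mg(n)$ must hold at \emph{every} $n$, and monotonicity of $(c_j)$ across blocks must be checked. Both again require conditions (d) and (e). A concrete choice that works is $c_j=g(n_k)/n_k$ for $j\in(n_{k-1},n_k]$ (after thinning $(n_k)$ so that $g(n_{k-1})\leq\frac{1}{2}g(n_k)$ and $n_{k-1}\leq\frac12 n_k$): this is nonincreasing and null by (d)--(e), gives $\sum_{j\leq n}c_j\leq 2g(n_{k-1})+n\cdot\frac{g(n_k)}{n_k}\leq 3g(n)$ for $n\in(n_{k-1},n_k]$ by (d), and $\sum_{j\leq n_k}c_j\geq\frac12 g(n_k)\geq\frac{k^2}{2}h(n_k)$, so $v(x,n_k)/h(n_k)\to\infty$. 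Finally, the identity $v(x,n)=\sum_{j=1}^{n}c_j$ for the dyadic zigzag function is used without proof; it is a standard fact about piecewise monotone functions with nonincreasing oscillations (and the paper relies on its analogue in Lemma~\ref{lem:comp:emb}), but in a self-contained proof it deserves at least a one-line justification that the supremum over arbitrary nonoverlapping systems is attained on the intervals of monotonicity.
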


Below we are going to show the conditions describing the situation, where the embedding $V[g(n)]\subseteq V[h(n)]$ is compact.

\subsection{Compact embeddings between Chanturia classes}

The classical Helly selection principle may be extended to Chanturia classes as the following theorem says (original formulation of \cite[Theorem 1]{Chistyakov} is slightly simplified here as we are considering real-valued functions as compared to functions with values in some metric space as originally considered by Chistyakov).

\begin{theorem}[{\cite[Theorem 1]{Chistyakov}}]
\label{thm:Helly:Chanturia}
Let $(x_j)_{j\in\N}$ be a sequence of bounded functions $x_j:I\to\R$ such that the sequence
\[
\mu(n) = \limsup_{j\to\infty} v(x_j, n)
\] 
satisfies $\mu(n) = o(n)$, i.e. $\lim_{n\to\infty} \frac{\mu(n)}{n} = 0$. Then there exists a subsequence $(x_{j_k})_{k\in\N}$ of $(x_j)_{j\in\N}$ pointwise convergent to some function $x:I\to\R$ satisfying $v(x,n) \leq \mu(n)$ for all $n\in \N$. 
\end{theorem}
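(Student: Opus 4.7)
The plan is to imitate the classical Helly selection argument, replacing its finite-variation hypothesis with the subquadratic growth $\mu(n)=o(n)$, which quantifies how ``almost regulated'' the family $(x_j)$ is on average. My first step would be to reduce to an equibounded sequence: since $(v(x_j,n))_{n\in\N}$ is nondecreasing and $\mu(n)=o(n)$ forces $\mu(1)<\infty$, the oscillations $v(x_j,1)$ are bounded in $j$; replacing $x_j$ by $x_j-x_j(0)$ does not alter $v(x_j,\cdot)$ and makes $|x_j(t)-x_j(0)|\leq v(x_j,1)$ uniformly, so after extracting a subsequence on which $x_j(0)$ converges (implicit boundedness of $(x_j(0))_j$ being necessary for any pointwise-limit statement) I obtain a uniform bound $|x_j(t)|\leq K$ for all $j$ and $t\in I$.

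Next I would fix a countable dense set $D\subseteq I$ containing $\{0,1\}$. Since each scalar sequence $(x_j(d))_j$ lies in $[-K,K]$, a standard diagonal extraction produces a subsequence --- which I continue to denote $(x_j)$ --- satisfying $x_j(d)\to x(d)$ for every $d\in D$. One more diagonalization arranges that $v(x_j,n)\to\nu(n)\leq\mu(n)$ for every $n\in\N$.

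The crux, and the step I expect to be the main obstacle, is extending pointwise convergence from $D$ to an arbitrary $t\in I\setminus D$. Fix $\varepsilon>0$ and choose $n$ with $\mu(n)<\varepsilon n/4$. For any $n$ nonoverlapping closed subintervals $J_1,\dots,J_n$ of $I$, selecting in each $J_i$ two points nearly attaining the oscillation of $x_j$ on $J_i$ yields disjoint subintervals of $I$ whose endpoint differences sum to that total oscillation, so
\[
\sum_{i=1}^n \sup_{u,v\in J_i}|x_j(u)-x_j(v)| \leq v(x_j,n)\leq \mu(n)+o(1),
\]
i.e.\ the average oscillation over any $n$-cell partition is at most $\mu(n)/n+o(1)<\varepsilon/2$ for large $j$. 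I would then, for the fixed $t$, choose a nested sequence of $n$-cell partitions with endpoints in $D$ so that the cell containing $t$ shrinks to $\{t\}$; a pigeonhole argument along this sequence should produce a specific cell $J_{i_0}\ni t$ with $D$-endpoints on which the oscillation of $x_j$ is $<\varepsilon$ for all sufficiently large $j$. Combining this with the (already established) Cauchy condition at the endpoints of $J_{i_0}$ makes $(x_j(t))_j$ Cauchy, hence convergent to some value I call $x(t)$. It is precisely this combinatorial extraction that uses $\mu(n)=o(n)$ essentially: under the weaker bound $\mu(n)=O(n)$ the per-cell averages do not vanish and the extension can fail.

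Finally, once $x_j(t)\to x(t)$ for every $t\in I$, the inequality $v(x,n)\leq\mu(n)$ is immediate by lower semicontinuity: for any nonoverlapping closed $I_1,\dots,I_n\subseteq I$,
\[
\sum_{i=1}^n|x(I_i)|=\lim_{k\to\infty}\sum_{i=1}^n|x_{j_k}(I_i)|\leq\limsup_{k\to\infty} v(x_{j_k},n)\leq\mu(n),
\]
and taking the supremum over all such partitions yields the claim.
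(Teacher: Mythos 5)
First, a remark on scope: the paper does not prove this statement at all --- it is imported verbatim (in a simplified, real-valued form) from Chistyakov's work and used as a black box --- so there is no in-paper proof to compare yours against, and I can only assess your argument on its own terms.

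Your argument has a genuine gap at exactly the step you flagged as the crux. The claim you need --- that for each $t\notin D$ and each $\varepsilon>0$ there is a \emph{single} cell $J_{i_0}\ni t$ with endpoints in $D$ such that $\mathrm{osc}(x_j,J_{i_0})<\varepsilon$ for all sufficiently large $j$ --- is false, and no pigeonhole over nested partitions can rescue it: the hypothesis $\mu(n)=o(n)$ controls, for each fixed $j$, the \emph{total} oscillation over $n$ disjoint cells, but it does not prevent the one ``bad'' cell from sitting on top of $t$ for infinitely many $j$. Concretely, pick $t^*,s^*\in(0,1)\setminus D$ and set $x_j=\chi_{\{t^*\}}$ for even $j$ and $x_j=\chi_{\{s^*\}}$ for odd $j$. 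Every $x_j$ vanishes on $D$ and $v(x_j,n)=\min(n,2)$ for every $j$, so $\mu(n)=o(n)$ and all of your preliminary extractions (convergence at $0$, on $D$, and of $v(x_j,n)$) may legitimately return the full sequence; yet $(x_j(t^*))_j$ alternates between $0$ and $1$, and every nondegenerate interval $J\ni t^*$ satisfies $\mathrm{osc}(x_j,J)=1$ for all even $j$. This is more than a missing detail: it shows that \emph{any} scheme which extracts a subsequence using only the values on a countable dense set together with the moduli $v(x_j,\cdot)$, and then argues that the resulting subsequence is automatically Cauchy at every remaining $t$, cannot prove the theorem --- a further extraction that ``sees'' the uncountably many points outside $D$ is unavoidable, and organizing it is the real content of Chistyakov's result (one standard route is to approximate each $x_j$ uniformly, within an error controlled by $v(x_j,n)/n$, by step functions with a bounded number of jumps, and then diagonalize over the approximants). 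The remaining parts of your write-up --- the reduction to an equibounded family (where you rightly note that pointwise boundedness is implicitly assumed in the statement), the diagonal extraction on $D$, and the lower-semicontinuity argument giving $v(x,n)\le\mu(n)$ once pointwise convergence is in hand --- are fine.
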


This theorem leads to an immediate corollary:

\begin{corollary}
\label{cor:Helly:Chanturia}
Any sequence $(x_j)_{j\in\N}$ of functions from $V[g(n)]$, which is bounded by some $M>0$, has a subsequence $(x_{j_k})_{k\in\N}$ pointwise convergent to some function $x\in V[g(n)]$  such that $v(x,n) \leq M g(n).$
\end{corollary}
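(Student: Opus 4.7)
The plan is to deduce this directly from Theorem \ref{thm:Helly:Chanturia} by checking that its hypotheses are satisfied. Fix the sequence $(x_j)_{j\in\N}\subseteq V[g(n)]$ bounded by $M$, which I interpret in the natural way: $v(x_j,n)\leq M g(n)$ for all $j,n\in\N$ (uniform pointwise boundedness of the $x_j$'s, needed for the Helly-type conclusion, follows from this since $|x_j(t)-x_j(0)|\leq v(x_j,1)\leq M g(1)$, assuming the bound $M$ also controls $|x_j(0)|$).

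First I would consider the sequence
\[
\mu(n) = \limsup_{j\to\infty} v(x_j,n)
\]
and note that the pointwise (in $n$) bound $v(x_j,n)\leq M g(n)$ passes to the limsup in $j$, giving $\mu(n)\leq M g(n)$ for every $n\in\N$. Next I would invoke property (e) of $g\in\mathcal{G}$, namely $\lim_{n\to\infty} n/g(n)=+\infty$, which is equivalent to $g(n)=o(n)$. Combining,
\[
\frac{\mu(n)}{n}\leq M\cdot\frac{g(n)}{n}\xrightarrow[n\to\infty]{}0,
\]
so $\mu(n)=o(n)$ and the hypothesis of Theorem \ref{thm:Helly:Chanturia} is verified.

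Applying that theorem yields a subsequence $(x_{j_k})_{k\in\N}$ which converges pointwise to some function $x\colon I\to\R$ with $v(x,n)\leq\mu(n)\leq M g(n)$ for all $n\in\N$. In particular $\sup_{n\in\N} v(x,n)/g(n)\leq M<\infty$, so $x\in V[g(n)]$, and the required estimate $v(x,n)\leq M g(n)$ is exactly what was just obtained.

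There is no substantial obstacle here, since the corollary is essentially a repackaging of Theorem \ref{thm:Helly:Chanturia} in the Chanturia-class language. The only point worth highlighting is that condition (e) from the definition of $\mathcal{G}$ is \emph{precisely} what is needed to convert a uniform bound of the form $v(x_j,n)\leq M g(n)$ into the Chistyakov hypothesis $\mu(n)=o(n)$; thus $\mathcal{G}$ is the correct class in which such a Helly-type compactness principle holds.
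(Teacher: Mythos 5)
Your proof is correct and follows essentially the same route as the paper: bound $\mu(n)=\limsup_j v(x_j,n)$ by $Mg(n)$, use condition (e) to get $\mu(n)=o(n)$, and apply Theorem \ref{thm:Helly:Chanturia}. No issues.
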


\begin{proof}
Since $(x_j)_{j\in\N}$ is a sequence of functions from $V[g(n)]$ bounded by $M$, we have
\[
v(x_j,n) \leq M g(n),
\]
for all $n,j\in\N$, so this sequence satisfies $\mu(n) \leq Mg(n)$ for all $n\in\N$. By assumption (e), we can see that $\mu(n) = o(n)$. Hence, by Theorem \ref{thm:Helly:Chanturia}, we know that there exists a subsequence $(x_{j_k})_{k\in\N}$ of $(x_j)_{j\in\N}$ pointwise convergent to some function $x\in V[g(n)]$  such that $v(x,n) \leq M g(n).$
\end{proof}

\begin{definition}
If $g\in\mathcal{G}$ and $M=(m_k)_{k\in\N}$ is an increasing sequence of natural numbers, then $g^M:\mathbb{N}\to[0,\infty)$ is the function defined by $g^M(n)=k g(n)$, where $k\in\mathbb{N}$ is given by $n\in[m_k,m_{k+1})$ (and for $n<m_1$ we just put $g^M(n)=g(n)$).
\end{definition}

We are ready to prove the main result of this Section.

\begin{theorem}\label{lem:comp:chant:emb}
Assume we have two functions $g,h\in \mathcal{G}$. The following statements are equivalent:
\begin{enumerate}
\item[(a)] if $K\subseteq V[g(n)]$ is bounded in $V[g(n)]$ and relatively compact in the $\Vert\cdot\Vert_\infty$ norm then $K\subseteq V[h(n)]$ and $K$ is relatively compact in the space $V[h(n)]$,
\item[(b)] $g(n) = o(h(n))$, i.e., 
\[
\lim_{n\to\infty}\frac{g(n)}{h(n)}=0,
\]
\item[(c)] $\Exh(\phi_{g^M})\subseteq\Exh(\phi_h)$ for some increasing sequence $M=(m_k)_{k\in\N}$ of natural numbers.
\end{enumerate}
\end{theorem}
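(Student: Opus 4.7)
The plan is to prove $(b)\Rightarrow(a)\Rightarrow(b)$ and, separately, $(b)\Leftrightarrow(c)$, mirroring the Waterman argument of Theorem \ref{thm:compactembeddings} (Lemmas \ref{lem:comp:emb}--\ref{lem:comp:emb:ideals}), with Corollary \ref{cor:Helly:Chanturia} playing the role of the classical Helly selection principle. Throughout we will use that convergence in $V[h(n)]=\BV(\phi_h)$ is controlled by the variational quantity $\sup_n v(\cdot,n)/h(n)$: the minorant measures $\mu_n=\frac{1}{h(n)}\sum_{i\leq n}\delta_i\leq\phi_h$ give $\sup_{J\in\mathcal{P}_I}\hat\phi_h(x(J))\geq\sup_n v(x,n)/h(n)$, while a matching upper bound (up to a constant) follows by an Abel-type argument based on condition (d).

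For $(b)\Rightarrow(a)$, take $(x_j)\subseteq K$ bounded in $V[g(n)]$ by some $M$ and relatively $\|\cdot\|_\infty$-compact. Corollary \ref{cor:Helly:Chanturia} produces a subsequence converging pointwise to some $x\in V[g(n)]$ with $v(x,n)\leq Mg(n)$, and relative sup-norm compactness lets us pass to a further subsequence converging uniformly to $x$. The two bounds $v(x_{j_k}-x,n)\leq 2Mg(n)$ and $v(x_{j_k}-x,n)\leq 2n\|x_{j_k}-x\|_\infty$ allow splitting $\sup_n v(x_{j_k}-x,n)/h(n)$ at an $n_0$ chosen via (b) so that $g(n)/h(n)<\varepsilon/(4M)$ for $n\geq n_0$: the tail is $<\varepsilon/2$ and the finite head vanishes as $k\to\infty$, so $x_{j_k}\to x$ in $V[h(n)]$.

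For $(a)\Rightarrow(b)$, negate (b), fix $\varepsilon_0>0$ and $n_k\to\infty$ with $g(n_k)/h(n_k)\geq\varepsilon_0$, set $c_k=g(n_k)/n_k$ (so $c_k\to 0$ by (e)), and build $x_k$ piecewise linear on a system of dyadic intervals oscillating $n_k$ times between $0$ and $c_k$, exactly as in the proof of Lemma \ref{lem:comp:emb}. Condition (d) together with monotonicity of $g$ yields $v(x_k,n)\leq g(n)$ for all $n$, so $(x_k)$ is bounded in $V[g(n)]$; $\|x_k\|_\infty=c_k\to 0$ makes $\{x_k\}$ relatively sup-norm compact; but $v(x_k,n_k)\geq g(n_k)$ forces $\sup_n v(x_k,n)/h(n)\geq\varepsilon_0$, so no subsequence of $(x_k)$ can converge in $V[h(n)]$, contradicting (a).

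For $(b)\Leftrightarrow(c)$ we work at the submeasure level. Given (b), choose $(m_k)$ strictly increasing with $g(n)/h(n)<1/k$ for $n\geq m_k$; then $g^M(n)\leq h(n)$ on $[m_1,\infty)$, so $\phi_h(C\setminus[1,k])\leq\phi_{g^M}(C\setminus[1,k])$ for $k\geq m_1$ and $\Exh(\phi_{g^M})\subseteq\Exh(\phi_h)$. Conversely, if (b) fails, pass to $n_j\in[m_{k_j},m_{k_j+1})$ with $k_j\to\infty$, $g(n_j)/h(n_j)\geq\varepsilon_0$, $g(n_{j+1})>2\sum_{i\leq j}g(n_i)$, and each block $(n_j-\lfloor g(n_j)\rfloor,n_j]$ contained in $[m_{k_j},m_{k_j+1})$; let $C$ be the union of these blocks. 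Then $|C\cap[1,n_j]|/h(n_j)\geq\varepsilon_0/2$ gives $C\notin\Exh(\phi_h)$, while $|(C\setminus[1,k])\cap[1,n]|\leq\frac{3}{2}g(n_{j^*})$ and $g^M(n)\geq k_{j^*}g(n_{j^*})$ for $n_{j^*}\leq n<n_{j^*+1}$ produce a ratio that is $O(1/k_{j^*})$, so $C\in\Exh(\phi_{g^M})$, contradicting (c). The main obstacle is this last verification: one must control the ratio $|(C\setminus[1,k])\cap[1,n]|/g^M(n)$ uniformly in $n$ rather than only along $(n_j)$, which forces one to exploit the factor $k$ built into $g^M$ together with the geometric growth of $(g(n_j))$ and to place each block entirely within a single piece $[m_{k_j},m_{k_j+1})$ of the partition defining $g^M$.
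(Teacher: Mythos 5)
Your implications (b)$\Rightarrow$(a), (a)$\Rightarrow$(b) and (b)$\Rightarrow$(c) are correct and essentially identical to the paper's: the same Helly-plus-uniform-convergence argument with the sum split at a threshold $n_0$ chosen from $g(n)/h(n)\to 0$, the same zigzag functions of height $g(n_k)/n_k$ oscillating $n_k$ times (the paper uses equispaced nodes $l/n_k$ rather than dyadic intervals, which changes nothing, and condition (d) gives $v(x_k,n)\le g(n)$ exactly as you say), and the same choice of $m_k$ forcing $g^M\le h$ eventually.

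The direction (c)$\Rightarrow$(b) is where your argument has genuine gaps. First, you require each block $(n_j-\lfloor g(n_j)\rfloor,n_j]$ to lie inside a single piece $[m_{k_j},m_{k_j+1})$ of the partition defining $g^M$, but $M$ is handed to you by (c), and the bad indices $n_j$ with $g(n_j)/h(n_j)\ge\varepsilon_0$ need not sit deep enough inside their pieces for this to be possible (they could, for instance, all coincide with left endpoints $m_k$); you give no argument that suitable $n_j$ exist. Second, your key estimate $|(C\setminus[1,k])\cap[1,n]|\le\frac{3}{2}g(n_{j^*})$ for $n_{j^*}\le n<n_{j^*+1}$ fails when $n$ lies inside the next block $(n_{j^*+1}-\lfloor g(n_{j^*+1})\rfloor,n_{j^*+1}]$: there $C\cap[1,n]$ can pick up nearly $\lfloor g(n_{j^*+1})\rfloor$ additional points, which dwarfs $g(n_{j^*})$. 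Both defects are patchable (spillover into the previous piece only replaces the factor $k_j$ by $k_j-1$, and for $n$ inside block $j^*+1$ the extra contribution is at most $g(n)$ because condition (d) gives $g(n+1)\le g(n)+1$), but neither patch is in your text. The paper avoids all of this with a much simpler witness: after normalizing so that $g(n)\le n$, take a single set $A$ with $|A\cap\{1,\ldots,n\}|=\lfloor g(n)\rfloor$ for every $n$ (such $A$ exists precisely because $g(n+1)\le g(n)+1$). Then $|A\cap\{1,\ldots,n\}|/g^M(n)\le 1/k$ for $n\ge m_k$, so $A\in\Exh(\phi_{g^M})$ for every increasing $M$, while $|A\cap\{1,\ldots,n_i\}|/h(n_i)\ge g(n_i)/(2h(n_i))\ge\alpha/2$ once $g(n_i)\ge 1$, so $A\notin\Exh(\phi_h)$ --- no blocks, no geometric growth, no case analysis. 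I recommend replacing your construction with this one.
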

\begin{proof}
(c)$\implies$(b): 
Because $(\frac{n}{g(n)})_{n\in\N}$ is monotonically tending to $\infty$, by dividing $g$ by a constant, without loss of generality we may assume that $g(n)\leq n$ for all $n\in\N$.

Suppose to the contrary that there exists $\alpha>0$ and an increasing sequence $(n_i)_{i\in\mathbb{N}}$ such that 
$$ \forall_{i\in\N} \frac{g(n_i)}{h(n_i)}\geq\alpha.$$

Let $A\subseteq\N$ be such a set that $|A\cap\{1,\ldots,n\}|=\lfloor g(n)\rfloor$ for each $n\in\N$ (note that such $A$ exists: since $(\frac{n}{g(n)})_{n\in\N}$ is nondecreasing, $\frac{n+1}{g(n+1)}\geq \frac{n}{g(n)}$, which implies $g(n+1)\leq g(n)+\frac{g(n)}{n}\leq g(n)+1$, so also $\lfloor g(n+1)\rfloor\leq \lfloor g(n)\rfloor+1$). Then for any $n\geq m_k$ we have
$$\frac{|A\cap\{1,\ldots,n\}|}{g^M(n)}\leq \frac{\lfloor g(n)\rfloor}{kg(n)}\leq \frac{1}{k}, $$
thus $A\in\Exh(\phi_{g^M})$ (by Remark \ref{rem-simpledensity}). On the other hand, there exists $N$ such that for any $n_i\geq N$ we have $g(n_i)\geq 1$. For such $n_i$ we obtain
$$\frac{|A\cap\{1,\ldots,n_i\}|}{h(n_i)}\geq \frac{g(n_i)}{2 h(n_i)}\geq \frac{\alpha}{2}. $$
It follows that $A\not\in\Exh(\phi_{h})$ (see again Remark \ref{rem-simpledensity}), a contradiction with $\Exh(\phi_{g^M})\subseteq\Exh(\phi_h)$.

%By Theorem~\ref{th:Chant:embed} we have that there exists $\eta\geq 0$ such that for all $n\in\N$
%$$\frac{g^M(n)}{h(n)}\leq\eta, $$
%thus for $n\geq m_k$ we obtain
%$$\frac{g(n)}{h(n)}\leq \frac{g^M(n)}{k \cdot h(n)}\leq \frac{\eta}{k}, $$
%which leads to $\lim_{n\to +\infty} \frac{g(n)}{h(n)}=0$.
%
(b)$\implies$(c): Notice that for every $k\in\N$ there exists $m_k$ such that for all $n\geq m_k$ we have
$$\frac{g(n)}{h(n)}\leq \frac{1}{k}. $$
Then, for $n\in[m_k,m_{k+1})$ we obtain
$$ \frac{g^M(n)}{h(n)}=\frac{kg(n)}{h(n)}\leq 1. $$
Therefore, $g^M(n)\leq h(n)$ for all $n\geq m_1$. It is easy to see that this condition implies $\Exh(\phi_{g^M})\subseteq\Exh(\phi_h)$.

(b)$\implies$(a): By Theorem \ref{th:Chant:embed} we can see that $V[g(n)]\subseteq V[h(n)]$. Let us take a bounded set $K\subseteq V[g(n)]$ which is relatively compact in the supremum norm. Let $M>0$ be such that for all $x\in K$ we have $\norm{x}_{\phi_g}\leq M$ and let $(x_n)_{n\in\mathbb{N}}\subseteq K$. By Helly selection theorem (cf. \cite{Chistyakov} and Corollary \ref{cor:Helly:Chanturia} above) we can take the subsequence $(x_{n_k})_{k\in\N}$ pointwise convergent to a function $x\in V[g(n)]$ such that $\norm{x}_{\phi_g}\leq M$. Because $K$ is relatively compact in the uniform convergence norm, we can also assume that $(x_{n_k})_{k\in\N}$ converges to $x$ uniformly. Now we are going to show that 
\[
\lim_{k\to \infty} \Vert x_{n_k}-x\Vert_{\phi_h} = 0.
\]
This will not be a difficult observation: first let us fix $\varepsilon>0$ and take such $n_0\in\N$ that for $n\geq n_0$  
we have $\frac{g(n)}{h(n)} \leq \frac{\varepsilon}{2M}$. Because the subsequence $(x_{n_k}-x)_{k\in\N}$ converges uniformly to $0$ as $k\to+\infty$, we may assume that there exists such $k_0\in \N$ that for any $k\geq k_0$ and for any interval $J\subseteq I$ we have 
\[
|(x_{n_k}-x)(J)| \leq h(1) \varepsilon / n_0.
\]
Then, for any family $J=(J_n)_{n\in\N}$ of nonoverlapping intervals and any $k\geq k_0$ we
may estimate the sum 
\[
\frac{\sum_{i=1}^n |(x_{n_k}-x)(J_i)|}{h(n)}
\]
depending on $n$ by:
\[
\frac{\sum_{i=1}^n |(x_{n_k}-x)(J_i)|}{h(n)} \leq \frac{\sum_{i=1}^{n_0} |(x_{n_k}-x)(J_i)|}{h(n)} \leq n_0 \frac{h(1)  \varepsilon / n_0}{h(n)} \leq \varepsilon,  
\]
for $n\leq n_0$, and by
\[
\frac{\sum_{i=1}^n |(x_{n_k}-x)(J_i)|}{h(n)} \leq \frac{\varepsilon}{2M} \frac{\sum_{i=1}^n |(x_{n_k}-x)(J_i)|}{g(n)} \leq \varepsilon,
\]
for $n>n_0$. This shows that for any $n\in \N$ we have 
\[
\frac{\sum_{i=1}^n |(x_{n_k}-x)(J_i)|}{h(n)} \leq \varepsilon,
\]
what proves the convergence $\Vert x_{n_k}-x\Vert_{\phi_h}\to 0$ as $k\to+\infty$.

(a)$\implies$(b): Contrary to our claim, assume that (a) holds true and that there exists $\alpha>0$ and an infinite increasing sequence $(n_k)_{k\in\N}\subseteq \N$ such that
\[
g(n_k) \geq \alpha h(n_k).
\]
We are going to build a sequence of continuous piecewise linear functions $x_k$ such that $\Vert x_k\Vert_{\phi_g} = 1$, $\Vert x_k\Vert_\infty\to 0$ as $k\to+\infty$, but   $\Vert x_k\Vert_{\phi_h} \geq \varepsilon_0$, so the sequence $(x_k)_{k\in\mathbb{N}}\subseteq V[g(n)]$ does not contain any convergent subsequence.

Let $x_k:I\to\R$ be given as a function such that $x_k(l/n_k) = 0$ for $l\in\{0,...,n_k\}$ being an even number and $x_k(l/n_k) = g(n_k)/n_k$  for $l\in\{0,...,n_k\}$ being an odd number.  For other $t\in I$ the function $g(t)$ is defined as piecewise linear. Then $\Vert x_k\Vert_\infty =  g(n_k)/n_k \to 0$ as $k\to +\infty$. At the same time 
\[
\Vert x_k\Vert_{\phi_h} = \frac{\sum_{j=1}^{n_k} g(n_k)/n_k}{h(n_k)} = \frac{g(n_k)}{h(n_k)}\geq \alpha>0
\]
and $\Vert x_k\Vert_{\phi_g} =1$. This completes the proof.
\end{proof}

\bibliographystyle{amsplain}
\bibliography{compactness-references.bib}

\end{document}